\title{Warning's Second Theorem with Restricted Variables}
\author{Pete L. Clark}
\author{Aden Forrow}
\author{John R. Schmitt}
\begin{document}
\newtheorem{lemma}{Lemma}[section]
\newtheorem{prop}[lemma]{Proposition}
\newtheorem{cor}[lemma]{Corollary}
\newtheorem{thm}[lemma]{Theorem}
\newtheorem{ques}{Question}
\newtheorem{quest}[lemma]{Question}
\newtheorem{conj}[lemma]{Conjecture}
\newtheorem{fact}[lemma]{Fact}
\newtheorem*{mainthm}{Main Theorem}
\newtheorem{obs}[lemma]{Observation}
\newtheorem{hint}{Hint}
\newtheorem{remark}[lemma]{Remark}
\newtheorem{example}[lemma]{Example}

\newcommand{\pp}{\mathfrak{p}}
\newcommand{\DD}{\mathcal{D}}
\newcommand{\F}{\ensuremath{\mathbb F}}
\newcommand{\Fp}{\ensuremath{\F_p}}
\newcommand{\Fl}{\ensuremath{\F_l}}
\newcommand{\Fpbar}{\overline{\Fp}}
\newcommand{\Fq}{\ensuremath{\F_q}}
\newcommand{\PP}{\mathbb{P}}
\newcommand{\PPone}{\mathfrak{p}_1}
\newcommand{\PPtwo}{\mathfrak{p}_2}
\newcommand{\PPonebar}{\overline{\PPone}}
\newcommand{\N}{\ensuremath{\mathbb N}}
\newcommand{\Q}{\ensuremath{\mathbb Q}}
\newcommand{\Qbar}{\overline{\Q}}
\newcommand{\R}{\ensuremath{\mathbb R}}
\newcommand{\Z}{\ensuremath{\mathbb Z}}
\newcommand{\SSS}{\ensuremath{\mathcal{S}}}
\newcommand{\Rn}{\ensuremath{\mathbb R^n}}
\newcommand{\Ri}{\ensuremath{\R^\infty}}
\newcommand{\C}{\ensuremath{\mathbb C}}
\newcommand{\Cn}{\ensuremath{\mathbb C^n}}
\newcommand{\Ci}{\ensuremath{\C^\infty}}\newcommand{\U}{\ensuremath{\mathcal U}}
\newcommand{\gn}{\ensuremath{\gamma^n}}
\newcommand{\ra}{\ensuremath{\rightarrow}}
\newcommand{\fhat}{\ensuremath{\hat{f}}}
\newcommand{\ghat}{\ensuremath{\hat{g}}}
\newcommand{\hhat}{\ensuremath{\hat{h}}}
\newcommand{\covui}{\ensuremath{\{U_i\}}}
\newcommand{\covvi}{\ensuremath{\{V_i\}}}
\newcommand{\covwi}{\ensuremath{\{W_i\}}}
\newcommand{\Gt}{\ensuremath{\tilde{G}}}
\newcommand{\gt}{\ensuremath{\tilde{\gamma}}}
\newcommand{\Gtn}{\ensuremath{\tilde{G_n}}}
\newcommand{\gtn}{\ensuremath{\tilde{\gamma_n}}}
\newcommand{\gnt}{\ensuremath{\gtn}}
\newcommand{\Gnt}{\ensuremath{\Gtn}}
\newcommand{\Cpi}{\ensuremath{\C P^\infty}}
\newcommand{\Cpn}{\ensuremath{\C P^n}}
\newcommand{\lla}{\ensuremath{\longleftarrow}}
\newcommand{\lra}{\ensuremath{\longrightarrow}}
\newcommand{\Rno}{\ensuremath{\Rn_0}}
\newcommand{\dlra}{\ensuremath{\stackrel{\delta}{\lra}}}
\newcommand{\pii}{\ensuremath{\pi^{-1}}}
\newcommand{\la}{\ensuremath{\leftarrow}}
\newcommand{\gonem}{\ensuremath{\gamma_1^m}}
\newcommand{\gtwon}{\ensuremath{\gamma_2^n}}
\newcommand{\omegabar}{\ensuremath{\overline{\omega}}}
\newcommand{\dlim}{\underset{\lra}{\lim}}
\newcommand{\ilim}{\operatorname{\underset{\lla}{\lim}}}
\newcommand{\Hom}{\operatorname{Hom}}
\newcommand{\Ext}{\operatorname{Ext}}
\newcommand{\Part}{\operatorname{Part}}
\newcommand{\Ker}{\operatorname{Ker}}
\newcommand{\im}{\operatorname{im}}
\newcommand{\ord}{\operatorname{ord}}
\newcommand{\unr}{\operatorname{unr}}
\newcommand{\B}{\ensuremath{\mathcal B}}
\newcommand{\Ocr}{\ensuremath{\Omega_*}}
\newcommand{\Rcr}{\ensuremath{\Ocr \otimes \Q}}
\newcommand{\Cptwok}{\ensuremath{\C P^{2k}}}
\newcommand{\CC}{\ensuremath{\mathcal C}}
\newcommand{\gtkp}{\ensuremath{\tilde{\gamma^k_p}}}
\newcommand{\gtkn}{\ensuremath{\tilde{\gamma^k_m}}}
\newcommand{\QQ}{\ensuremath{\mathcal Q}}
\newcommand{\I}{\ensuremath{\mathcal I}}
\newcommand{\sbar}{\ensuremath{\overline{s}}}
\newcommand{\Kn}{\ensuremath{\overline{K_n}^\times}}
\newcommand{\tame}{\operatorname{tame}}
\newcommand{\Qpt}{\ensuremath{\Q_p^{\tame}}}
\newcommand{\Qpu}{\ensuremath{\Q_p^{\unr}}}
\newcommand{\scrT}{\ensuremath{\mathfrak{T}}}
\newcommand{\That}{\ensuremath{\hat{\mathfrak{T}}}}
\newcommand{\Gal}{\operatorname{Gal}}
\newcommand{\Aut}{\operatorname{Aut}}
\newcommand{\tors}{\operatorname{tors}}
\newcommand{\Zhat}{\hat{\Z}}
\newcommand{\linf}{\ensuremath{l_\infty}}
\newcommand{\Lie}{\operatorname{Lie}}
\newcommand{\GL}{\operatorname{GL}}
\newcommand{\End}{\operatorname{End}}
\newcommand{\aone}{\ensuremath{(a_1,\ldots,a_k)}}
\newcommand{\raone}{\ensuremath{r(a_1,\ldots,a_k,N)}}
\newcommand{\rtwoplus}{\ensuremath{\R^{2  +}}}
\newcommand{\rkplus}{\ensuremath{\R^{k +}}}
\newcommand{\length}{\operatorname{length}}
\newcommand{\Vol}{\operatorname{Vol}}
\newcommand{\cross}{\operatorname{cross}}
\newcommand{\GoN}{\Gamma_0(N)}
\newcommand{\GeN}{\Gamma_1(N)}
\newcommand{\GAG}{\Gamma \alpha \Gamma}
\newcommand{\GBG}{\Gamma \beta \Gamma}
\newcommand{\HGD}{H(\Gamma,\Delta)}
\newcommand{\Ga}{\mathbb{G}_a}
\newcommand{\Div}{\operatorname{Div}}
\newcommand{\Divo}{\Div_0}
\newcommand{\Hstar}{\cal{H}^*}
\newcommand{\txon}{\tilde{X}_0(N)}
\newcommand{\sep}{\operatorname{sep}}
\newcommand{\notp}{\not{p}}
\newcommand{\Aonek}{\mathbb{A}^1/k}
\newcommand{\Wa}{W_a/\mathbb{F}_p}
\newcommand{\Spec}{\operatorname{Spec}}

\newcommand{\abcd}{\left[ \begin{array}{cc}
a & b \\
c & d
\end{array} \right]}

\newcommand{\abod}{\left[ \begin{array}{cc}
a & b \\
0 & d
\end{array} \right]}

\newcommand{\unipmatrix}{\left[ \begin{array}{cc}
1 & b \\
0 & 1
\end{array} \right]}

\newcommand{\matrixeoop}{\left[ \begin{array}{cc}
1 & 0 \\
0 & p
\end{array} \right]}

\newcommand{\w}{\omega}
\newcommand{\Qpi}{\ensuremath{\Q(\pi)}}
\newcommand{\Qpin}{\Q(\pi^n)}
\newcommand{\pibar}{\overline{\pi}}
\newcommand{\pbar}{\overline{p}}
\newcommand{\lcm}{\operatorname{lcm}}
\newcommand{\trace}{\operatorname{trace}}
\newcommand{\OKv}{\mathcal{O}_{K_v}}
\newcommand{\Abarv}{\tilde{A}_v}
\newcommand{\kbar}{\overline{k}}
\newcommand{\Kbar}{\overline{K}}
\newcommand{\pl}{\rho_l}
\newcommand{\plt}{\tilde{\pl}}
\newcommand{\plo}{\pl^0}
\newcommand{\Du}{\underline{D}}
\newcommand{\A}{\mathbb{A}}
\newcommand{\D}{\underline{D}}
\newcommand{\op}{\operatorname{op}}
\newcommand{\Glt}{\tilde{G_l}}
\newcommand{\gl}{\mathfrak{g}_l}
\newcommand{\gltwo}{\mathfrak{gl}_2}
\newcommand{\sltwo}{\mathfrak{sl}_2}
\newcommand{\h}{\mathfrak{h}}
\newcommand{\tA}{\tilde{A}}
\newcommand{\sss}{\operatorname{ss}}
\newcommand{\X}{\Chi}
\newcommand{\ecyc}{\epsilon_{\operatorname{cyc}}}
\newcommand{\hatAl}{\hat{A}[l]}
\newcommand{\sA}{\mathcal{A}}
\newcommand{\sAt}{\overline{\sA}}
\newcommand{\OO}{\mathcal{O}}
\newcommand{\OOB}{\OO_B}
\newcommand{\Flbar}{\overline{\F_l}}
\newcommand{\Vbt}{\widetilde{V_B}}
\newcommand{\XX}{\mathcal{X}}
\newcommand{\GbN}{\Gamma_\bullet(N)}
\newcommand{\Gm}{\mathbb{G}_m}
\newcommand{\Pic}{\operatorname{Pic}}
\newcommand{\FPic}{\textbf{Pic}}
\newcommand{\solv}{\operatorname{solv}}
\newcommand{\Hplus}{\mathcal{H}^+}
\newcommand{\Hminus}{\mathcal{H}^-}
\newcommand{\HH}{\mathcal{H}}
\newcommand{\Alb}{\operatorname{Alb}}
\newcommand{\FAlb}{\mathbf{Alb}}
\newcommand{\gk}{\mathfrak{g}_k}
\newcommand{\car}{\operatorname{char}}
\newcommand{\Br}{\operatorname{Br}}
\newcommand{\gK}{\mathfrak{g}_K}
\newcommand{\coker}{\operatorname{coker}}
\newcommand{\red}{\operatorname{red}}
\newcommand{\CAY}{\operatorname{Cay}}
\newcommand{\ns}{\operatorname{ns}}
\newcommand{\xx}{\mathbf{x}}
\newcommand{\yy}{\mathbf{y}}
\newcommand{\E}{\mathbb{E}}
\newcommand{\rad}{\operatorname{rad}}
\newcommand{\Top}{\operatorname{Top}}
\newcommand{\Map}{\operatorname{Map}}
\newcommand{\Li}{\operatorname{Li}}
\renewcommand{\Map}{\operatorname{Map}}
\newcommand{\ZZ}{\mathcal{Z}}
\newcommand{\uu}{\mathfrak{u}}
\newcommand{\mm}{\mathfrak{m}}
\newcommand{\zz}{\mathbf{z}}

\begin{abstract}
We present a restricted variable generalization of Warning's Second Theorem (a result giving a lower bound on the number of solutions of a low degree polynomial system over a finite field, assuming one solution exists).  This is analogous to Brink's restricted variable generalization of Chevalley's Theorem (a result giving conditions for a low degree polynomial system \emph{not} to have exactly one solution).  Just as Warning's Second Theorem implies Chevalley's Theorem, our result implies Brink's Theorem.  We include several combinatorial applications, enough to show that we have a general tool for obtaining quantitative refinements of combinatorial existence theorems.
\end{abstract}

\maketitle
\noindent
Let $q = p^{\ell}$ be a power of a prime number $p$, and let $\F_q$ be ``the'' finite field of order $q$.  
\\ \\
For $a_1,\ldots,a_n,N \in \Z^+$, we denote by $\mm(a_1,\ldots,a_n;N) \in \Z^+$ a certain combinatorial quantity defined and computed in $\S$ 2.1.

\section{Introduction}
\noindent
A \textbf{C$_1$-field} is a field $F$ such that for all positive integers 
$d < n$ and every homogeneous polynomial $f(t_1,\ldots,t_n) \in F[t_1,\ldots,t_n]$ of degree $d$, there is $x \in F^n \setminus \{(0,\ldots,0)\}$ such that $f(x) = 0$.  This notion is due to E. Artin.  However, already in 1909 L.E. Dickson had conjectured that (in Artin's language) 
every finite field is a $C_1$-field \cite{Dickson09}.  Tsen showed that function fields in one variable over an algebraically closed field are $C_1$-fields \cite{Tsen33}, but this left the finite field case open.  Artin assigned the problem of proving Dickson's conjecture to his student Ewald 
Warning.  In 1934 C. Chevalley visited Artin, asked about his student's work, and quickly proved a result which implies that finite fields are $C_1$-fields.  In danger of losing his thesis problem, Warning responded by establishing a further improvement.  The papers of Chevalley and Warning were published consecutively \cite{Chevalley35}, \cite{Warning35}, and the following result is now a classic of elementary number theory.

\begin{thm}(Chevalley-Warning Theorem)
\label{WARTHM}
Let $n,r,d_1,\ldots,d_r \in \Z^+$ with 
\begin{equation}
\label{SMALLDEGREE}
d_1 + \ldots + d_r < n. 
\end{equation}
  For $1 \leq i \leq r$, let 
$P_i(t_1,\ldots,t_n) \in \F_q[t_1,\ldots,t_n]$ be a polynomial of degree $d_i$.  Let 
\[ Z = Z(P_1,\ldots,P_r) = \{x \in \F_q^n \mid P_1(x) = \ldots = P_r(x) = 0\} \]
be the common zero set in $\F_q^n$ of the $P_i$'s, and let $\zz = \# Z$.  Then: \\
a) (Chevalley's Theorem \cite{Chevalley35}) We have $\zz = 0$ or $\zz \geq 2$.  \\
b) (Warning's Theorem \cite{Warning35}) We have $\zz \equiv 0 \pmod{p}$.
\end{thm}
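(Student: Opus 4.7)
The plan is to establish part (b) first; part (a) then follows immediately, since $\zz \equiv 0 \pmod{p}$ forces either $\zz = 0$ or $\zz \geq p \geq 2$. The core idea is to encode the indicator function of $Z$ as a polynomial of controlled degree and to compute its sum over $\F_q^n$.

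First I would introduce the auxiliary polynomial
\[
P(t_1, \ldots, t_n) = \prod_{i=1}^{r} \bigl(1 - P_i(t_1, \ldots, t_n)^{q-1}\bigr) \in \F_q[t_1, \ldots, t_n].
\]
By Fermat's little theorem for $\F_q$, the element $a^{q-1} \in \F_q$ equals $1$ if $a \neq 0$ and $0$ if $a = 0$. Consequently $P(x) = 1$ for $x \in Z$ and $P(x) = 0$ otherwise, so summing over $\F_q^n$ gives
\[
\sum_{x \in \F_q^n} P(x) = \zz \cdot 1_{\F_q} \in \F_q,
\]
which is the image of $\zz$ in the prime subfield $\F_p \subseteq \F_q$. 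It therefore suffices to show that the left-hand side vanishes.

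Next I would use the degree bound: the total degree of $P$ is at most $(q-1)(d_1 + \cdots + d_r) < n(q-1)$ by the hypothesis (\ref{SMALLDEGREE}). So it is enough to prove the following power-sum lemma: for every monomial $t_1^{e_1} \cdots t_n^{e_n}$ with $\sum_{i=1}^{n} e_i < n(q-1)$, one has $\sum_{x \in \F_q^n} x_1^{e_1} \cdots x_n^{e_n} = 0$ in $\F_q$. The sum factors as $\prod_{i=1}^{n} \sum_{x_i \in \F_q} x_i^{e_i}$, so it suffices that a single factor vanish. By pigeonhole some index $j$ has $e_j < q-1$; if $e_j = 0$ the $j$-th factor is $q \equiv 0 \pmod{p}$, and if $0 < e_j < q-1$ then, picking a generator $g$ of the cyclic group $\F_q^\times$, the factor equals $\sum_{k=0}^{q-2} g^{k e_j} = (g^{(q-1)e_j} - 1)/(g^{e_j} - 1) = 0$, since $g^{e_j} \neq 1$ while $g^{(q-1)e_j} = 1$.

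Expanding $P$ into monomials and applying the lemma term by term then yields $\sum_{x \in \F_q^n} P(x) = 0$ in $\F_q$, whence $\zz \equiv 0 \pmod{p}$ and both (a) and (b) follow. The only genuinely non-trivial ingredient is the power-sum lemma, so that is where I would take the most care; the remainder is organized bookkeeping around Fermat's little theorem and the strict degree inequality.
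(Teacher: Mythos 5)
Your proof is correct and follows Ax's short proof, which the paper itself includes (the indicator polynomial $P = \prod_i (1-P_i^{q-1})$, the power-sum lemma on $\sum_{x \in \F_q^n} x_1^{e_1}\cdots x_n^{e_n}$, and the observation that $\deg P < n(q-1)$ forces the sum to vanish). Your deduction of part (a) from part (b) via $\zz \equiv 0 \pmod p \Rightarrow \zz = 0 \text{ or } \zz \geq p \geq 2$ is also exactly right; this is the standard route, and the paper notes that Chevalley's original longer proof adapts better to the restricted-variable setting, but for the classical unrestricted theorem as stated your Ax-style argument is the cleanest and matches the paper's own exposition.
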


\noindent
In fact very easy modifications of Chevalley's argument prove Warning's Theorem.  
 The more substantial contribution of \cite{Warning35} is the following result.

\begin{thm}(Warning's Second Theorem)
\label{WARNING2}
With hypotheses as in Theorem \ref{WARTHM}, 
\begin{equation}
\label{WAR2EQ}
 \zz = 0 \text{ or } \zz \geq q^{n-d}. 
\end{equation}
\end{thm}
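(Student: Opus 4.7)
The plan is to encode $Z$ as the support of a single auxiliary polynomial and then appeal to a quantitative non-vanishing estimate for polynomials on the grid $\F_q^n$. Set
\[ R(t) = \prod_{i=1}^{r} \bigl(1 - P_i(t)^{q-1}\bigr) \in \F_q[t_1,\ldots,t_n]. \]
Since $a^{q-1}$ equals $1$ for $a \in \F_q^{\times}$ and $0$ for $a = 0$, one checks that $R(x) = 1$ if $x \in Z$ and $R(x) = 0$ otherwise, so $R$ is the characteristic function of $Z$ and $\zz = \#\{x \in \F_q^n : R(x) \neq 0\}$. The total degree of $R$ is at most $d(q-1)$, and by (\ref{SMALLDEGREE}) we have $d(q-1) < n(q-1)$.

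If $Z = \emptyset$ then $\zz = 0$ is immediate, so I would assume $Z \neq \emptyset$; in that case $R$ does not vanish identically as a function on $\F_q^n$. The theorem now reduces to the following purely polynomial assertion: a polynomial in $n$ variables over $\F_q$ of total degree at most $d(q-1)$ which is not identically zero on $\F_q^n$ must be nonzero at $\geq q^{n-d}$ points of $\F_q^n$.

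To prove this I would invoke an Alon-Furedi-style lower bound for the size of the nonvanishing locus of a polynomial on a grid. In the language of the combinatorial quantity $\mm$ promised in $\S$ 2.1, such a polynomial must take a nonzero value at no fewer than $\mm(q-1,\ldots,q-1;\, n(q-1) - d(q-1))$ points of $\F_q^n$, and a direct computation of $\mm$ in this symmetric case gives that this minimum equals $q^{n-d}$. The proof of the grid bound itself proceeds by induction on $n$: slice $\F_q^n$ by the hyperplanes $t_n = c$ for $c \in \F_q$, apply the inductive hypothesis on each slice where the restriction is nonzero, and exploit the fact that on the set $S$ of slices where the restriction vanishes identically, $R$ is divisible by $\prod_{c \in S}(t_n - c)$; this transfers $|S|$ of the degree budget out of $t_n$ and strengthens the bound on the remaining $n-1$ variables.

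The main obstacle is making this induction sharp. Naive slicing yields only a bound like $q^{n-d}$ under uniformity assumptions, and the extremal polynomials force one to track a minimum over integer partitions of the leftover degree $n(q-1) - d(q-1)$. This optimization is precisely what $\mm(a_1,\ldots,a_n;N)$ is designed to record, which is presumably why the paper introduces it at the outset rather than ad hoc.
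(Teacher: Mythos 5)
Your overall strategy matches the paper's: set $R(t) = \prod_{i=1}^r \bigl(1 - P_i(t)^{q-1}\bigr)$, observe it is the characteristic function of $Z$ with $\deg R \leq (q-1)d$, and apply the Alon-F\"uredi Theorem on the grid $\F_q^n$ to lower-bound the size of the nonvanishing locus. The intermediate ``polynomial assertion'' you isolate---that a polynomial of degree at most $d(q-1)$ not vanishing identically on $\F_q^n$ is nonzero at at least $q^{n-d}$ points---is correct and is exactly what Alon-F\"uredi delivers.

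However, the parameters you feed into $\mm$ are wrong, and the resulting quantity does not equal $q^{n-d}$. In the Alon-F\"uredi Theorem as stated in $\S$2.2, taking $A_i = \F_q$ gives $a_i = \#A_i = q$, not $q-1$, and the relevant bound is $\mm\bigl(q,\ldots,q;\ nq - (q-1)d\bigr)$, not $\mm\bigl(q-1,\ldots,q-1;\ n(q-1) - d(q-1)\bigr)$. Your expression is genuinely a different number: when $q=2$ every bin has size $q-1=1$, so your $\mm$ equals $1$ for all $n,d$, whereas the required bound is $2^{n-d}$; when $q=3$, $n=2$, $d=1$, your $\mm(2,2;2)=1$, but the correct value is $\mm(3,3;4)=3=q^{n-d}$. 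The correct computation goes through Lemma \ref{FSLEMMA}b): with $k=n-d$ one has $nq-(q-1)d = kq+(n-k)$, hence $\mm(q,\ldots,q;\ kq+n-k)=q^k=q^{n-d}$. (Your inductive sketch of the grid estimate is a reasonable outline of how Alon-F\"uredi is proved, but the paper simply cites that theorem as a black box, so that part is not where the argument lives.)
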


\noindent
There is a rich body of work on extensions and refinements of Theorem \ref{WARTHM} -- too much to recall here! -- but let us mention work of Ax and Katz which computes 
the minimal $p$-adic valuation of $\zz$ as $P_1,\ldots,P_r$ range over all polynomials of degrees $d_1,\ldots,d_r$ and work of Esnault showing that various geometric classes of varieties -- including all Fano varieties -- over finite fields must have rational points \cite{Ax64}, \cite{Katz71},  \cite{Esnault03}.  In contrast we know of only one refinement of Theorem \ref{WARNING2}: \cite{Heath-Brown11}. 
\\ \\
The above generalizations of the Chevalley-Warning Theorem point in the direction of arithmetic geometry.  Here we are more interested in interfaces with combinatorics.  Here is the first result in this direction.

\begin{thm}(Schanuel's Theorem \cite{Schanuel74})
\label{SCHANTHM}
Let $n,r,v_1,\ldots,v_r \in \Z^+$.  For $1 \leq j \leq r$, let $P_j(t_1,\ldots,t_n) \in \Z/p^{v_j} \Z[t_1,\ldots,t_n]$ be a polynomial  without constant term. Let
\[ Z^{\circ} = \{ x \in \Z^n \setminus (p\Z)^n \mid P_j(x) \equiv 0 \pmod{p_j^{v_j}} \text{ for all } 1 \leq j \leq r \}. \]
a) If $\sum_{j=1}^r\deg(P_j) \left(\frac{p^{v_j}-1}{p-1}\right) < n$, then $Z^{\circ} \neq \varnothing$.  \\
b) If $\sum_{j=1}^r (p^{v_j}-1)\deg(P_j)  < n$, then  $Z^{\circ} \cap \{0,1\}^n \neq \varnothing$.  \\
c) Let $b_1,\ldots,b_n$ be non-negative integers.  If $\sum_{j=1}^r 
(p^{v_j}-1)\deg(P_j) < \sum_{i=1}^n b_i$, then $Z^{\circ} \cap \prod_{i=1}^n [0,b_i] \neq \varnothing$.  
\end{thm}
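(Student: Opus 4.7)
The plan is to convert each $\Z/p^{v_j}\Z$ condition into a system of polynomial equations over $\F_p$ via $p$-adic (Witt-vector) digit expansion, then apply Warning's Second Theorem (Theorem \ref{WARNING2}) to extract a nontrivial solution.

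Set $V = \max_j v_j$ and write each variable $x_i \in \Z/p^V\Z$ in its Witt-vector expansion $(y_{i,0}, y_{i,1}, \ldots, y_{i,V-1}) \in \F_p^V$. A standard computation with the Witt polynomials shows that the single congruence $P_j(x) \equiv 0 \pmod{p^{v_j}}$ is equivalent to a system of $v_j$ polynomial equations over $\F_p$ in the $y_{i,k}$, where the $k$-th equation has $\F_p$-degree $\deg(P_j) \cdot p^k$ (this comes from the Frobenius-type growth built into Witt arithmetic). Summing over $k = 0, \ldots, v_j - 1$ gives total degree $\deg(P_j) \cdot (1 + p + \cdots + p^{v_j-1}) = \deg(P_j)(p^{v_j}-1)/(p-1)$, and summing over $j$ gives the quantity $D$ appearing in the hypothesis of (a).

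For (a), apply Theorem \ref{WARNING2} to the $\F_p^{nV}$-system just constructed. Since every $P_j$ has no constant term the zero solution exists, so the theorem guarantees at least $p^{nV - D}$ solutions in $\F_p^{nV}$. The condition $x \notin (p\Z)^n$ translates to ``at least one $y_{i,0} \ne 0$''; the number of solutions with $y_{\cdot,0} = 0$ is trivially at most $p^{n(V-1)}$ (the remaining $n(V-1)$ variables can be anything). The hypothesis $D < n$ then gives $p^{nV-D} > p^{n(V-1)}$, so some solution must have $y_{\cdot,0} \ne 0$, producing the required $x \in Z^\circ$. Parts (b) and (c) fit the same template but with the $y_{i,k}$'s further restricted (to $\{0,1\}$ in (b), to $[0, b_i]$ in (c)); the conclusions would follow from restricted-variable analogues of Warning's Second Theorem---precisely the tools the paper is developing---where the replacement of $(p^{v_j}-1)/(p-1)$ by $(p^{v_j}-1)$ reflects the smaller per-variable ``degree budget'' of the restricted sets compared with all of $\F_p$.

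The most delicate steps are (i) cleanly justifying the degree claim for the Witt components, and (ii) ensuring the crude upper bound $p^{n(V-1)}$ on the ``all $y_{\cdot,0}=0$'' solutions is strong enough in combination with the Warning's Second Theorem lower bound. A safer alternative, should the degree arithmetic become unwieldy, is to induct on $V$: the base case $V=1$ is immediate from Chevalley's theorem, and the inductive step splits solutions by whether $y_{\cdot,0} = 0$, reducing the second case to a smaller instance in which each $v_j$ is replaced by $v_j - 1$ after dividing out the polynomial $P_j(px')$ by $p$.
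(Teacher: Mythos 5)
Your proof of part (a) takes a genuinely different route from the paper's. Schanuel (and Brink, as recalled in \S2.3 and around Lemma~\ref{SCHANLEMMA}) keeps the \emph{same} $n$ variables and replaces each congruence $P_j \equiv 0 \pmod{p^{v_j}}$ by $v_j$ congruences $\pmod{p}$ of degrees $d_j, pd_j, \ldots, p^{v_j-1}d_j$ (the Schanuel operator $\Delta$); with no constant term, Chevalley's Theorem applied in $\F_p^n$ then directly gives a nontrivial solution, i.e.\ $x \notin (p\Z)^n$. You instead Witt-expand each variable into $V$ coordinates, obtaining a system over $\F_p^{nV}$, and because ``nontrivial in $\F_p^{nV}$'' is weaker than ``some $y_{i,0} \neq 0$'' you are forced to invoke Warning's Second Theorem and a counting argument: $p^{nV-D} > p^{n(V-1)}$ solutions beats the crude bound on bad ones. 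Both paths work for (a); yours is heavier (needs Warning's Second rather than Chevalley, plus the not-quite-trivial Witt degree estimate, which does hold as an upper bound by passing through ghost components and the triangular relation $w_k = \sum_{i\le k} p^i W_i^{p^{k-i}}$), while Schanuel's is cleaner precisely because his operator never leaves the original $n$ variables, so no counting is needed to ensure $x \notin (p\Z)^n$.

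However, for parts (b) and (c) your argument has a genuine gap, not just a rough edge. Schanuel proves (b) by substituting $t_i \mapsto t_i^{p-1}$ and (c) by substituting $t_i \mapsto t_{i,1}^{p-1}+\cdots+t_{i,b_i}^{p-1}$ into the polynomials of (a), using that $x^{p-1} \in \{0,1\}$ for $x\in\F_p$; this is where the factor $(p-1)$ appears, turning $(p^{v_j}-1)/(p-1)$ into $p^{v_j}-1$ (see \S3.3). You instead gesture at ``restricted-variable analogues of Warning's Second Theorem'' and a vague ``degree budget'' heuristic. For (b) this could in principle be made to work (restrict $y_{i,0}\in\{0,1\}$, force $y_{i,k}=0$ for $k\ge1$, and apply Brink's Theorem), but as written it is not an argument. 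For (c) it simply fails in your framework: the integers $0,1,\ldots,b_i$ do \emph{not} have Witt coordinates of the form $(y_{i,0},0,\ldots,0)$ once $b_i\ge2$ (only Teichm\"uller representatives do), so ``restrict $y_{i,k}$ to $[0,b_i]$'' does not describe the solution set $Z^\circ\cap\prod[0,b_i]$. Schanuel's substitution $t_i \mapsto \sum_k t_{i,k}^{p-1}$ is not a cosmetic convenience but the mechanism that converts a free sum of $0$--$1$ Teichm\"uller values into membership in $[0,b_i]$; without it (c) does not reduce to anything you have proved. The inductive alternative you sketch at the end is closer in spirit to how the Schanuel--Brink operator is actually built (a ``divide by $p$'' step), and developing that would be a more robust repair than the Witt expansion.
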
 
\noindent  
These results have been revisited in light of the \textbf{Polynomial Method}, a technique initiated by N. Alon \cite{Alon99} and continued by many others.  The first application in \cite{Alon99} is to Chevalley's Theorem.
Recently U. Schauz \cite{Schauz08a} and then D. Brink \cite{Brink11} used Alon's ideas to prove a \textbf{restricted variable} generalization.

\begin{thm}(Restricted Variable Chevalley Theorem) \label{RESVARCHEV}
Let $P_1,\ldots,P_r \in \F_q[t] = \F_q[t_1,\ldots,t_n]$.  For $1 \leq i \leq n$, let $\varnothing \neq A_i \subseteq \F_q$ and put $A = \prod_{i=1}^n A_i$. 
Put \[Z_A = \{ a = (a_1,\ldots,a_n) \in A \mid 
P_1(a) = \ldots = P_r(a) = 0 \}, \ \zz_A = \# Z_A. \]
If $\sum_{i=1}^r (q-1)\deg P_i < \sum_{i=1}^n(\# A_i - 1)$,
then $\zz_A \neq 1$. 
\end{thm}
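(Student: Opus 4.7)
The plan is to imitate Brink's polynomial-method proof of the ordinary Chevalley theorem, adapted to the restricted variable setting. The guiding idea: if $Z_A = \{\alpha\}$, then the ``indicator function'' of $\{\alpha\}$ on $A$ can be realized by a polynomial of total degree at most $(q-1)\sum_i \deg P_i$, while the unique such representative of minimal coordinate-degree has total degree $\sum_i(\#A_i-1)$. The hypothesis then yields a contradiction.

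More concretely, I would first introduce the Chevalley-style auxiliary polynomial
\[ F(t_1,\ldots,t_n) \;=\; \prod_{j=1}^r \bigl(1 - P_j(t)^{q-1}\bigr) \;\in\; \F_q[t_1,\ldots,t_n]. \]
By Fermat's little theorem, for every $a \in \F_q^n$ we have $F(a) = 1$ if $P_1(a) = \cdots = P_r(a) = 0$ and $F(a) = 0$ otherwise; in particular, restricted to $A$, $F$ is the indicator function of $Z_A$. Its total degree satisfies $\deg F \leq (q-1)\sum_{j=1}^r \deg P_j$. Next, for each $i$ set $\phi_i(t_i) = \prod_{c \in A_i}(t_i - c)$, a monic polynomial of degree $\#A_i$ vanishing identically on $A_i$.

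Now I apply the standard reduction lemma (this is the Schauz/Brink ingredient, essentially Alon's Combinatorial Nullstellensatz): using the division algorithm to successively replace $t_i^{\#A_i}$ by $t_i^{\#A_i} - \phi_i(t_i)$, one produces a polynomial $R(t_1,\ldots,t_n)$ with $R \equiv F \pmod{(\phi_1,\ldots,\phi_n)}$, $\deg_{t_i} R < \#A_i$ for each $i$, and crucially $\deg R \leq \deg F$ (since each substitution strictly lowers the $t_i$-degree and does not raise the total degree). Because each $\phi_i$ vanishes on $A_i$, we still have $R(a) = F(a)$ for every $a \in A$. Moreover, the evaluation map from the space of polynomials of coordinate-degree strictly less than $\#A_i$ in each variable $t_i$ to functions $A \to \F_q$ is an $\F_q$-linear bijection (matching dimensions $\prod_i \#A_i = \#A$, with injectivity from the one-variable factor theorem), so $R$ is the unique such representative of its restriction to $A$.

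Finally, suppose for contradiction that $\zz_A = 1$, say $Z_A = \{\alpha\}$ with $\alpha = (\alpha_1,\ldots,\alpha_n)$. Then $R$ agrees on $A$ with the explicit Lagrange-style polynomial
\[ L(t) \;=\; \prod_{i=1}^n \prod_{c \in A_i \setminus \{\alpha_i\}} \frac{t_i - c}{\alpha_i - c}, \]
which has coordinate-degree $\#A_i - 1 < \#A_i$ in each $t_i$. By the uniqueness just noted, $R = L$, so
\[ \sum_{i=1}^n (\#A_i - 1) \;=\; \deg L \;=\; \deg R \;\leq\; \deg F \;\leq\; (q-1)\sum_{j=1}^r \deg P_j, \]
contradicting the hypothesis and completing the proof. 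The only genuinely technical step is the reduction lemma in the third paragraph; this is where one has to be careful that reducing mod the $\phi_i$'s does not inflate the total degree, but it is by now a well-established tool.
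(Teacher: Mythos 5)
Your proof is correct, and it is essentially the Schauz--Brink argument that the paper cites for Theorem \ref{RESVARCHEV} but does not itself reproduce: construct the Chevalley indicator $F=\prod_j(1-P_j^{q-1})$, reduce modulo the ideal $(\phi_1,\ldots,\phi_n)$ without increasing total degree, invoke the fact (Alon--Tarsi/Chevalley) that the reduced polynomials of coordinate-degree $<\#A_i$ biject with functions $A\to\F_q$, and compare the degree of the reduced representative with the degree of the Lagrange delta at $\alpha$. The degree bookkeeping is right: $\deg R\leq\deg F\leq(q-1)\sum_j\deg P_j<\sum_i(\#A_i-1)=\deg L$, and $R=L$ by uniqueness, contradiction.

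However, this is a genuinely different route from the one the paper takes. The paper obtains Theorem \ref{RESVARCHEV} as a \emph{corollary} of its main result, the Restricted Variable Warning's Second Theorem (Theorem \ref{SRVW2THM}), by way of Theorem \ref{NFBRINKTHM} and Lemma \ref{OBVIOUSLEMMA}: Theorem \ref{SRVW2THM} is proved by combining the Alon--F\"uredi Theorem (a quantitative count of non-vanishing points over a grid) with the Schanuel--Brink operator, and then one observes via the balls-in-bins pigeonhole that under the hypothesis the lower bound $\mm(\cdot)$ is at least $2$, so $\zz_A\neq 1$. The trade-off is clear: your direct argument is shorter and self-contained, relying only on Chevalley-style reduction and a dimension count, but it delivers only the qualitative ``not exactly one'' conclusion. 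The paper's heavier Alon--F\"uredi machinery delivers an explicit lower bound on $\zz_A$ (and, more generally, the prime-power/number-field versions) from which the Chevalley-type statement drops out for free. If one wants a minimal proof of Theorem \ref{RESVARCHEV} alone, your route is the efficient one; if one wants the quantitative refinement that is the subject of the paper, one must go through Alon--F\"uredi.

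One small stylistic note: you could streamline the uniqueness step by appealing directly to Alon's Combinatorial Nullstellensatz II --- after reducing $F$, the coefficient of $t_1^{\#A_1-1}\cdots t_n^{\#A_n-1}$ in $R-L$ is nonzero while $R-L$ vanishes identically on $A$ --- which is how Brink phrases it; but what you wrote is equivalent and avoids a black-box citation.
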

\noindent
Brink gave a common generalization of 
Theorem \ref{SCHANTHM} and of Theorem \ref{RESVARCHEV} for $q = p$.

\begin{thm}(Brink's Theorem \cite{Brink11})
\label{RVSCHANUEL}
\label{BIGBRINKTHM}
Let $P_1(t_1,\ldots,t_n),\ldots,P_r(t_1,\ldots,t_n) \in \Z[t_1,\ldots,t_n]$ be 
polynomials, let $p$ be a prime, let $v_1,\ldots,v_r \in \Z^+$, and 
let $A_1,\ldots,A_n$ be nonempty subsets of $\Z$ such that for each $i$, 
the elements of $A_i$ are pairwise incongruent modulo $p$, and put 
$A = \prod_{i=1}^n A_i$. 
 Let 
\[ Z_A = \{x \in A \mid P_j(x) \equiv 0 \pmod{p^{v_j}} \ \forall 1 \leq j \leq r \}, \ \zz_A = \# Z_A. \]
a) If $\sum_{j=1}^r (p^{v_j}-1)\deg(P_j) < \sum_{i=1}^n \left( \#A_i - 1 \right)$, then $\zz_A \neq 1$. \\
b) (\textbf{Boolean Case}) If $A = \{0,1\}^n$ and $\sum_{j=1}^r (p^{v_j}-1)\deg(P_j) < n$, then $\zz_A \neq 1$. 
\end{thm}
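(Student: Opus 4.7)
The plan is to follow the polynomial-method proof of the Restricted Variable Chevalley Theorem (Theorem \ref{RESVARCHEV}), replacing each Fermat-style indicator $1-P_j^{q-1}$ of vanishing over a field by an analogous $\F_p$-polynomial that detects divisibility of $P_j$ by $p^{v_j}$. Since part (b) is the specialization of (a) to $A_i=\{0,1\}$, for which $\sum_i(\#A_i-1)=n$, it suffices to treat (a).

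The central (and hardest) task is to construct, for each $j$, a polynomial $F_j\in\F_p[\bar t_1,\ldots,\bar t_n]$ of total degree at most $(p^{v_j}-1)\deg P_j$ such that, under the bijection $\pi\colon A\to\bar A:=\prod\bar A_i$ induced by mod-$p$ reduction (a bijection precisely because each $A_i$ is $p$-incongruent), $F_j\circ\pi$ is the indicator of $\{x\in A:p^{v_j}\mid P_j(x)\}$. I would start from Lucas' theorem: the integer-valued polynomial $E_v(t):=\binom{t-1}{p^v-1}\in\Q[t]$ of degree $p^v-1$ satisfies $E_v(a)\equiv 1\pmod p$ when $p^v\mid a$ and $E_v(a)\equiv 0\pmod p$ otherwise, for every $a\in\Z$ (for $a\ge 1$ this is immediate from Lucas; for general $a$ it follows by period-$p^v$ periodicity of $E_v\bmod p$ proved digit-by-digit). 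Composing, $E_{v_j}(P_j(t))\in\Q[t]$ has total degree $(p^{v_j}-1)\deg P_j$ and encodes the correct indicator on $A$. Reducing modulo the monic ideal $(g_1,\ldots,g_n)$, $g_i(t_i):=\prod_{a\in A_i}(t_i-a)\in\Z[t_i]$, does not raise the total degree and preserves values on $A$. The reduced polynomial lies in $\Z_{(p)}[\bar t]/(g_i)$ because every Lagrange weight $\prod_{b\ne a}(a-b)$ on each $A_i$ is a $\Z_{(p)}$-unit---this is exactly where the $p$-incongruence hypothesis is used---and its mod-$p$ reduction is the desired $F_j$.

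The rest is a direct application of Alon's coefficient formula. Form $F:=\prod_j F_j\in\F_p[\bar t]$ and reduce modulo the $g_i$; the result has multi-degree below $\#A_i$ in each $\bar t_i$, total degree at most $\sum_j(p^{v_j}-1)\deg P_j<\sum_i(\#A_i-1)$ by hypothesis, and value $1$ on $\pi(Z_A)\subseteq\bar A$ and $0$ elsewhere. Alon's formula then yields
\[
0 \;=\; [\textstyle\prod_i \bar t_i^{\#A_i-1}]\,F \;=\; \sum_{\bar a\in\bar A}\frac{F(\bar a)}{\prod_i \prod_{b\in\bar A_i,\,b\ne\bar a_i}(\bar a_i-b)},
\]
with every denominator a unit of $\F_p^\times$. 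If $\zz_A=1$ then exactly one term on the right is nonzero, a contradiction; hence $\zz_A\ne 1$. The main obstacle throughout is purely the construction of $F_j$: making sure that the Lucas congruence for $E_v$ transfers through composition with $P_j$ and reduction mod $(g_i)$ while keeping both the sharp degree bound and $p$-integrality intact---every one of these hinges on the assumption that each $A_i$ consists of pairwise $p$-incongruent integers.
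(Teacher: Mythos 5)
Your proof is correct, but it takes a genuinely different route from the paper's. The paper never proves Theorem \ref{BIGBRINKTHM} directly: it deduces it (indeed, its number-field strengthening, Theorem \ref{NFBRINKTHM}) from the Restricted Variable Warning's Second Theorem \ref{SRVW2THM}(a) together with the pigeonhole Lemma \ref{OBVIOUSLEMMA}, and Theorem \ref{SRVW2THM} is in turn proved by combining the Alon--F\"uredi Theorem with the Schanuel--Brink operator $\Delta$, which replaces a single degree-$d$ congruence modulo $\pp^{v}$ by $v$ congruences modulo $\pp$ of degrees $d, qd, \ldots, q^{v-1}d$. You instead build the mod-$p$ indicator of ``$p^{v}\mid P_j$'' in one stroke from the integer-valued polynomial $\binom{t-1}{p^{v}-1}$ and Lucas's theorem, reduce modulo the monic Lagrange products $g_i$ to land (pointwise, via the $p$-unit Vandermonde weights) in $\Z_{(p)}[t]$, and close with the Coefficient Formula (Alon's CN~II). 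This is essentially the binomial-coefficient strategy that the paper attributes to Wilson and to Das Adhikari--Grynkiewicz--Sun near the end of \S~4.4, rather than Brink's or the paper's own. The trade-off: your argument is shorter and self-contained for the \emph{qualitative} conclusion $\zz_A\ne 1$, while the paper's detour through Alon--F\"uredi yields the quantitative lower bound $\zz_A\ge\mm(\#A_1,\dots,\#A_n;\dots)$ of which $\zz_A\ne 1$ is only the coarsest consequence. One technical point you handle correctly but should keep explicit: $\binom{t-1}{p^{v}-1}\notin\Z_{(p)}[t]$ for $v\ge 2$ (since $v_p((p^{v}-1)!)>0$), so $p$-integrality of the reduced representative must indeed be argued through the values on $A$ and the $p$-unit interpolation weights, exactly as you do, and not termwise on coefficients.
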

\noindent
Following a remark of Brink, we state in $\S$ 3.3 a generalization to number fields, Theorem \ref{NFBRINKTHM}, which fully recovers Theorem \ref{RESVARCHEV}.
\\ \\
The main result of this paper simultaneously generalizes Theorems  \ref{WARNING2} and \ref{NFBRINKTHM}.

\begin{thm}(Restricted Variable Warning's Second Theorem)
\label{SRVW2THM}
Let $K$ be a number field with ring of integers $R$, let $\pp$ be a nonzero prime ideal of $R$, and let $q = p^{\ell}$ be the prime power such that 
$R/\pp \cong \F_q$.  Let $A_1,\ldots,A_n$ be nonempty subsets of $R$ such that for each $i$, 
the elements of $A_i$ are pairwise incongruent modulo $\pp$, and put $A = \prod_{i=1}^n A_i$.  Let $r,v_1,\ldots,v_r \in \Z^+$.  Let $P_1,\ldots,P_r \in R[t_1,\ldots,t_n]$.  Let 
\[ Z_A = \{ x \in A \mid P_j(x) \equiv 0 \pmod{\pp^{v_j}} \ \forall 1 \leq j \leq r\}, \ \zz_A = \# Z_A. \]
a) $\zz_A = 0$ or $\zz_A \geq \mm \left( \# A_1,\ldots, \# A_n; \# A_1 + \ldots + \# A_n - 
\sum_{j=1}^r (q^{v_j} -1) \deg(P_j) \right)$. \\
b) We recover Theorem \ref{WARNING2} and Theorem \ref{NFBRINKTHM} as special cases. \\
c) (\textbf{Boolean Case}) We have  $\zz_{\{0,1\}^n} = 0$ or $\zz_{\{0,1\}^n} \geq 2^{n-\sum_{j=1}^r (q^{v_j} -1) \deg(P_j)}$.
\end{thm}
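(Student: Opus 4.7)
My plan is to combine the polynomial-method construction underlying Brink's Theorem (which handles the higher prime-power moduli $\pp^{v_j}$) with the quantitative Alon--F\"uredi lower bound on the number of nonzeros of a low-degree polynomial on a product grid (this is the combinatorial content that $\mm$, as defined and computed in $\S$ 2.1, should encapsulate). Let $\overline{A_i} \subseteq \F_q$ denote the image of $A_i$ modulo $\pp$. The incongruence hypothesis guarantees $\#\overline{A_i} = \#A_i$ and that the reduction map $A \to \overline{A} := \prod_i \overline{A_i}$ is a bijection, so counting elements of $Z_A$ is the same as counting those $\bar x \in \overline{A}$ whose unique lift $x \in A$ satisfies $P_j(x) \equiv 0 \pmod{\pp^{v_j}}$ for every $j$.

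The key step is then to construct a polynomial $\Phi \in \F_q[t_1,\ldots,t_n]$ of degree at most
\[
D := \sum_{j=1}^r (q^{v_j}-1)\deg(P_j),
\]
such that for every $\bar x \in \overline{A}$ one has $\Phi(\bar x) \neq 0$ if and only if $x \in Z_A$. When all $v_j = 1$ the classical choice $\Phi = \prod_j \bigl(1 - \overline{P_j}^{\,q-1}\bigr)$ already works, each factor having degree $(q-1)\deg(P_j)$. For $v_j \geq 2$ I would import (and if necessary generalize from $\Z/p^{v_j}\Z$ to $R/\pp^{v_j}$) the Schanuel/Wilson-type factor of degree $(q^{v_j}-1)\deg(P_j)$ that is the engine of Brink's proof of Theorem \ref{BIGBRINKTHM}; taking the product over $j$ then gives the desired $\Phi$.

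Assuming $\zz_A \geq 1$ (else part (a) is trivial), $\Phi$ is nonvanishing somewhere on $\overline{A}$, so the Alon--F\"uredi-type minimum underlying $\mm$ gives
\[
\zz_A \;=\; \#\{\bar x \in \overline{A} : \Phi(\bar x) \neq 0\} \;\geq\; \mm\bigl(\#A_1,\ldots,\#A_n;\, {\textstyle\sum_i}\#A_i - \deg\Phi\bigr) \;\geq\; \mm\bigl(\#A_1,\ldots,\#A_n;\, {\textstyle\sum_i}\#A_i - D\bigr),
\]
where the last inequality uses $\deg \Phi \leq D$ and the (expected) monotonicity of $\mm$ in its second argument; this proves part (a). For part (b), Warning's Second Theorem is the case $A_i = \F_q$, all $v_j = 1$, where the explicit formula $\mm(q,\ldots,q;\, qn - (q-1)d) = q^{n-d}$ (from $\S$ 2.1) recovers Theorem \ref{WARNING2}; Brink's Theorem corresponds to the hypothesis $D < \sum_i(\#A_i - 1)$, which makes the second argument of $\mm$ strictly exceed $n$ and thereby forces $\mm \geq 2$, i.e.\ $\zz_A \neq 1$. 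Part (c) (Boolean Case) is the instance $\#A_i = 2$, for which $\mm(2,\ldots,2;\, 2n - D) = 2^{n-D}$.

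The main obstacle is the construction of the degree-$(q^{v_j}-1)\deg(P_j)$ factor of $\Phi$ when $v_j \geq 2$: this is precisely the technical heart of Brink's Theorem and, depending on how cleanly his $\Z_p$-construction lifts to the general $\pp$-adic setting, may need a modest extension. Everything else is a smooth reduction to the combinatorial formula for $\mm$ that is computed in $\S$ 2.1.
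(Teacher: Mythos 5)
Your proposal is correct and takes essentially the same approach as the paper: reduce modulo $\pp$ to a grid in $\F_q^n$, use the Schanuel--Brink operator (generalized from $\Z_{(p)}$ to the localization $R_\pp$, which the paper carries out in $\S$ 2.3) to replace each congruence modulo $\pp^{v_j}$ by $v_j$ congruences modulo $\pp$ of degrees $\deg P_j, q\deg P_j, \ldots, q^{v_j-1}\deg P_j$, form the product indicator $\Phi = \prod_{j,i}\bigl(1 - (\overline{\Delta^i P_j})^{q-1}\bigr)$ of total degree at most $\sum_j (q^{v_j}-1)\deg P_j$, and apply Alon--F\"uredi. Your deductions of parts (b) and (c) from the explicit formulas for $\mm$ in $\S$ 2.1 (Lemma \ref{FSLEMMA}) and the elementary Lemma \ref{OBVIOUSLEMMA} match the paper exactly.
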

\noindent
Theorem \ref{SRVW2THM} includes all of the results stated so far except Theorem \ref{WARTHM}b). In this regard we should first mention that J. Ax gave a 
\emph{ten line proof} of Theorem \ref{WARTHM}b) \cite{Ax64}.  Chevalley's original proof is longer but seems more penetrating: 
it adapts easily to give a restricted variable generalization of Theorem \ref{WARTHM}b):  see \cite[Thm. 16]{Clark14}.  Adapting Chevalley's method for finitely restricted variables over an arbitrary field leads to a \textbf{Coefficient Formula} which has appeared in the recent literature 
\cite[Thm. 3.2]{Schauz08a}, \cite[Thm. 3]{Lason10}, \cite[Thm. 4]{Karasev-Petrov12}, \cite[$\S$ 3.3]{Clark14} as a natural sharpening of Alon's Combinatorial Nullstellensatz II 
\cite[Thm. 1.2]{Alon99}.  
\\ \indent
By whatever name, the above Polynomial Method is the key to the proof of the above results of Schanuel, Schauz and Brink.  The key to the proof of the Restricted Variable Warning's Second Theorem is a \emph{different} Polynomial Method: the \textbf{Alon-F\"uredi Theorem}.  $\S$ 2 of this paper recalls the statement of this theorem and gives some other needed preliminaries of both a combinatorial 
and number-theoretic nature.  The proof of Theorem \ref{SRVW2THM} occurs in 
$\S$ 3 (the shortest section!).  
\\ \\
Chevalley's Theorem has some combinatorial applications, notably the Theorem of 
Erd\H os, Ginzburg and Ziv (henceforth EGZ).  Schanuel's refinement has a very striking application in additive combinatorics: it yields a theorem of Olson computing the Davenport constant of a finite commutative $p$-group.  Further, it is the main technical input of a result of Alon, Kleitman, Lipton, Meshulam, Rabin and Spencer (henceforth AKLMRS) on selecting from set systems to get a union of cardinality divisible by a prime power $q$.  As Brink shows, his Theorem \ref{BIGBRINKTHM} can be applied in additive combinatorics to convert theorems asserting the existence of subsequences into theorems asserting the existence of ``generalized subsequences'' formed by taking linear combinations with coefficients in a restricted variable set.  This is a natural generalization, going back at least as far as the Shannon capacity: c.f. \cite{Mead-Narkiewicz82}.  Analogues of the EGZ Theorem in the context of generalized subsequences (or ``weighted subsequences'') in $p$-groups are pursued in the recent work \cite{DAGS12} of Das Adhikari, Grynkiewicz and Sun (henceforth DAGS).
\\ \\
In $\S$ 4 we apply Theorem \ref{SRVW2THM} to each of the above situations, getting in each case a quantitative refinement which also includes the \textbf{inhomogeneous case}: thus whereas Brink gave an upper bound on the length of a sequence in a $p$-group $G$ with no generalized $0$-sum subsequence, we give a lower bound on the \emph{number} of $g$-sum generalized subsequences (for any $g \in G$) which recovers Brink's result when we specialize to $g = 0$ and ask only for one nontrivial subsequence.  On the other hand, specializing to the case of ``classical'' $g$-sum subsequences we recover a recent result of Chang, Chen, Qu, Wang and Zhang (henceforth CCQWZ) which was proved via combinatorial means \cite{CCQWZ11}.  We give similar refinements of the results of AKLMRS and Das Adhikari, Grynkiewiz and Sun.  
\\ \\
We hope these combinatorial results will be of interest.  But more than any single application, our main goal is to demonstrate that Theorem \ref{SRVW2THM} is a tool that can be broadly applied to refine combinatorial existence theorems into theorems which give explicit (and sometimes sharp) lower bounds on the \emph{number} of combinatorial objects asserted to exist and to treat inhomogeneous cases with results in which the lower bounds are conditional on the existence of any objects of a given type (a plainly necessary restriction in many natural situations).  We tried to find applications which are substantial enough to serve as a true ``proof of concept,'' and we hope to convince the reader that this tool can be a useful one for researchers in branches of mathematics where polynomial methods are currently being applied.

\section{Preliminaries}

\subsection{Balls in Bins}
\textbf{} \\ \\ \noindent
Let $n \in \Z^+$, and let $a_1 \geq \ldots \geq a_n \geq 1$ be integers.  Consider bins $A_1,\ldots,A_n$ such that $A_i$ can hold at most $a_i$ balls.  
For $N \in \Z^+$, a \textbf{distribution of N balls in the bins} $A_1,\ldots,A_n$ is an $n$-tuple $y = (y_1,\ldots,y_n)$ with $y_1 + \ldots + y_n = N$ and $1 \leq y_i \leq a_i$ for all $i$.  Such distributions exist if and only if $n \leq N \leq a_1 + \ldots + a_n$. 
\\ \indent For a distribution $y$ of $N$ balls into bins $A_1,\ldots,A_n$, let
$P(y) = y_1 \cdots y_n$.  If $n \leq N \leq a_1 + \ldots + a_n$, let $\mathfrak{m}(a_1,\ldots,a_n;N)$ be the minimum value of $P(y)$ as $y$ ranges over all distributions of $N$ balls into bins $A_1,\ldots,A_n$.   We have
$\mathfrak{m}(a_1,\ldots,a_n;n) = 1$.  We extend constantly to the left: if $N \in \Z$ is such that $N < n$, put $\mathfrak{m}(a_1,\ldots,a_n;N) = 1$.  Similarly, 
we have $\mathfrak{m}(a_1,\ldots,a_n;a_1+\ldots+a_n) = a_1 \cdots a_n$.  We extend 
constantly to the right: if $N \in \Z$ is such that $N > a_1+\ldots+a_n$, put 
$\mathfrak{m}(a_1,\ldots,a_n;N)= a_1 \cdots a_n$.  Note that if $N_1 \leq N_2$ 
then $\mathfrak{m}(a_1,\ldots,a_n;N_1) \leq \mathfrak{m}(a_1,\ldots,a_n;N_2)$.  

\begin{lemma}
\label{OBVIOUSLEMMA}
Let $n,a_1,\ldots,a_n \in \Z^+$ with $\max \{a_1,\ldots,a_n\} \geq 2$.  Let $N > n$ be an integer.  Then $\mathfrak{m}(a_1,\ldots,a_n;N) \geq 2$.  
\end{lemma}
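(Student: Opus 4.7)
The plan is to split into cases based on how $N$ compares to the capacity sum $a_1+\ldots+a_n$, using the constant-extension conventions on the right.

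First, suppose $n < N \leq a_1 + \ldots + a_n$, so that genuine distributions exist. I would take any such distribution $y = (y_1,\ldots,y_n)$ with $y_i \geq 1$ for all $i$ and $y_1+\ldots+y_n = N$. Since $N > n$, the constraints $y_i \geq 1$ cannot all be tight simultaneously, so by a one-line pigeonhole there exists some index $i$ with $y_i \geq 2$. Combined with $y_j \geq 1$ for all other $j$, this gives $P(y) = y_1 \cdots y_n \geq 2$. Taking the minimum over all valid $y$ yields $\mathfrak{m}(a_1,\ldots,a_n;N) \geq 2$.

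Second, suppose $N > a_1+\ldots+a_n$. By the constant-to-the-right extension in the definition, $\mathfrak{m}(a_1,\ldots,a_n;N) = a_1 \cdots a_n$. Since every $a_i \geq 1$ and the hypothesis $\max\{a_1,\ldots,a_n\} \geq 2$ guarantees at least one factor is at least $2$, the product is $\geq 2$.

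These two cases exhaust the regime $N > n$, since the left-extension range $N < n$ is excluded by hypothesis (and $N = n$ is explicitly excluded by $N > n$). I do not anticipate any serious obstacle: the statement is essentially a combinatorial tautology, and the only thing to be slightly careful about is not forgetting the right-hand extension convention, which is why splitting into the two cases above seems like the cleanest presentation.
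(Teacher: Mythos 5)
Your proof is correct and is simply a fleshed-out version of the paper's one-line argument, which reads in its entirety: ``This is, literally, the pigeonhole principle.'' Your case split (distributions exist vs.\ the right-hand constant extension) and the pigeonhole step in the first case are exactly what that remark is gesturing at.
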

\begin{proof} This is, literally, the pigeonhole principle.
\end{proof}
\noindent
The following simple result describes the minimal distribution in all cases and thus essentially computes $\mathfrak{m}(a_1,\ldots,a_n;N)$.  A formula in the general case would be unwieldy, but we give exact formulas in some special cases that we will need later.
 
\begin{lemma}
\label{FSLEMMA}
Let $n \in \Z^+$, and let $a_1 \geq \ldots \geq a_n \geq 1$ be integers.  
Let $N$ be an integer with $n \leq N \leq a_1 + \ldots + a_n$.  \\
a) We define the \textbf{greedy configuration} $y_G = (y_1,\ldots,y_n)$: after placing one ball in each bin, place the remaining balls into bins from left to right, filling each bin completely before moving on to the next bin, until we run out of balls.  Then 
\[ \mathfrak{m}(a_1,\ldots,a_n;N) = P(y_G) = y_1 \cdots y_n. \]
b) Suppose $a_1 = \ldots = a_n = a \geq 2$.  If $n \leq N \leq an$, then
\[ \mm(a,\ldots,a;N) = (R+1)a^{\lfloor \frac{N-n}{a-1} \rfloor}, \]
where $R \equiv N-n \pmod{a-1}$ and $0 \leq R < a-1$.  \\
c) For all non-negative integers $k$, we have 
\[\mathfrak{m}(2,\ldots,2;2n-k) = 2^{n-k}. \]
\end{lemma}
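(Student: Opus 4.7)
The plan is to derive part (c) as an immediate specialization of part (b), taking $a = 2$. Since $a - 1 = 1$, every integer is congruent to $0 \pmod{a-1}$, so the remainder $R$ in the formula is always $0$, and $\lfloor (N-n)/(a-1) \rfloor$ simplifies to $N - n$. Thus for $n \leq N \leq 2n$ the formula in (b) collapses to $\mathfrak{m}(2,\ldots,2;N) = (0+1)\cdot 2^{N-n} = 2^{N-n}$. Substituting $N = 2n - k$ yields $\mathfrak{m}(2,\ldots,2;2n-k) = 2^{n-k}$, valid in the range $0 \leq k \leq n$.

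As a sanity check, one can also verify this directly using the greedy configuration of part (a). After placing one ball in each of the $n$ bins (using $n$ balls), we are left with $(2n-k) - n = n - k$ balls. When $0 \leq k \leq n$, these $n-k$ balls fill the first $n-k$ bins to capacity $2$ and leave the remaining $k$ bins with a single ball; the product is then $2^{n-k}\cdot 1^k = 2^{n-k}$, matching the formula. This double derivation is useful because the greedy description is more concrete and likely the form in which the bound is invoked when applying Theorem \ref{SRVW2THM}(c).

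The only point requiring a small comment is the range of $k$. For $k = n$ we obtain $\mathfrak{m} = 1$, consistent with the base case $\mathfrak{m}(a_1,\ldots,a_n;n) = 1$ noted before the lemma. For $k > n$ we have $N = 2n - k < n$, and by the leftward constant extension $\mathfrak{m}(2,\ldots,2;2n-k) = 1$; in this regime the statement is understood either as the trivial lower bound $\mathfrak{m} \geq 2^{n-k}$ (which is the way it is actually used in Theorem \ref{SRVW2THM}(c), where the exponent $n - \sum_j (q^{v_j}-1)\deg(P_j)$ can be negative) or as restricted to $0 \leq k \leq n$.

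There is no real obstacle here: the content of part (c) is entirely contained in part (b), and the only reason to single it out is that the case $a = 2$ is precisely what will be fed into the Boolean case of the main theorem, so having a clean closed form in terms of the deficit $k = 2n - N$ is convenient for later reference.
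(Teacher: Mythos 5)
Your treatment of part (c) is correct and is exactly the paper's argument: the paper's entire proof of (c) reads ``This is the special case $a = 2$ of part b).'' Your explicit simplifications ($a - 1 = 1$ forces $R = 0$ and $\lfloor (N-n)/(a-1) \rfloor = N - n$, so $\mathfrak{m}(2,\ldots,2;N) = 2^{N-n}$), the substitution $N = 2n - k$, the greedy-configuration sanity check, and the remark about the leftward constant extension for $k > n$ all just unpack what that one line leaves implicit. For part (c) alone there is nothing to criticize.

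The gap is that the statement you were asked to prove is the full three-part lemma, and you prove only part (c); you invoke parts (a) and (b) without establishing them, and these are where the real content lies. The paper proves (a) by introducing two elementary moves on distributions: ``bin swaps'' (interchanging coordinates $y_i < y_j$ with $i < j$), which leave the product $P(y)$ unchanged, and ``unbalancing moves'' (moving a ball from a bin with $y_i > 1$ to a bin with $y_j < a_j$ when $y_i \leq y_j$), which strictly decrease $P(y)$ since $\frac{y_i' y_j'}{y_i y_j} = \frac{y_i y_j + y_i - y_j - 1}{y_i y_j} < 1$. Any distribution can be carried to the greedy configuration $y_G$ by a sequence of such moves, so $P(y) \geq P(y_G)$ for all $y$, which is exactly $\mathfrak{m}(a_1,\ldots,a_n;N) = P(y_G)$. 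Part (b) then follows by division with remainder: writing $N - n = k(a-1) + R$ with $0 \leq R < a - 1$, the greedy configuration is $(a,\ldots,a,R+1,1,\ldots,1)$ with $k$ full bins of size $a$, giving $P(y_G) = (R+1)a^k$. Without these two arguments your derivation of (c) has no foundation; you should either supply them or make explicit that (a) and (b) are being taken as given.
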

\begin{proof}
a) Consider the following two kinds of ``elementary moves'' which transform 
one distribution $y$ of $N$ balls in bins of size $a_1 \geq \ldots \geq a_n \geq 1$ into another $y'$: \\
(i) (Bin Swap): If for $i < j$ we have $y_i < y_j$, then let $y'$ be obtained 
from $y$ by interchanging the $i$th and $j$th coordinates.  Then $P(y') = P(y)$.  \\
(ii) (Unbalancing Move): Suppose that for $1 \leq i \neq j \leq n$ we have 
$1 <  y_i \leq y_j < a_j$.  Then we may remove a ball form the $i$th bin and 
place it in the $j$th bin to get a new distribution $y' = (y_1',\ldots,y_n')$ and 
\[ P(y') = \frac{y_i' y_j'}{y_i y_j} P(y) = \frac{y_iy_j + y_i-y_j -1}{y_i y_j} P(y)< P(y). \]
Starting with any distribution $y$, we may perform a sequence of bin swaps 
to get a distribution $y'$ with and $y_1' \geq \ldots \geq y_n'$ 
and then a sequence of unbalancing moves, each of which has $i$ maximal 
such that $1 < y_i$ and $j$ minimal such that $y_j < a_j$, to arrive at the greedy configuration 
$y_G$.  Thus $P(y) = P(y') \geq P(y_G)$.  \\
b)   Put $k = \lfloor \frac{N-n}{a-1} \rfloor$, so via division with remainder we have
\[ N-n =  k(a-1) + R. \]
The greedy configuration is then 
\[ y_G = (\overbrace{a,\ldots,a}^{k },R+1,\overbrace{1,\ldots,1}^{n-k-1} ). \]
c) This is the special case $a = 2$ of part b). 
\end{proof}

\subsection{The Alon-F\"uredi Theorem}

\begin{thm}(Alon-F\"uredi Theorem) Let $F$ be a field, let $A_1,\ldots,A_n$ 
be nonempty finite subsets of $F$.  Put $A = \prod_{i=1}^n A_i$ and $a_i = \# A_i$ for all $1 \leq i \leq n$.  Let $P \in F[t] = F[t_1,\ldots,t_n]$ be a polynomial.  Let 
\[\mathcal{U}_A = \{x \in A \mid P(x) \neq 0\}, \ \mathfrak{u}_A = \# \mathcal{U}_A. \]
Then  $\uu_A = 0$ or $\uu_A \geq \mathfrak{m}(a_1,\ldots,a_n;a_1+\ldots+a_n - \deg P)$.
\end{thm}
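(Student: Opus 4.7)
The plan is to perform a standard reduction to polynomials whose degree in each variable is suitably bounded, and then induct on $n$. For each $i$, set $f_i(t_i) = \prod_{a \in A_i}(t_i - a) \in F[t_i]$, a monic polynomial of degree $a_i$ vanishing on $A_i$. Replacing $P$ by its remainder modulo the ideal $(f_1,\ldots,f_n)$ does not alter the values of $P$ on $A$ (so preserves $\uu_A$) and does not increase $\deg P$: each elementary reduction rewrites a monomial $t^e$ having some $e_i \geq a_i$ as a sum of monomials whose $t_i$-degree is strictly smaller while the degrees in the other variables are unchanged, so the total degree strictly drops. Since $\mm(a_1,\ldots,a_n;N)$ is monotone nondecreasing in $N$, a bound for the reduced polynomial implies the desired bound for the original $P$. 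So I may assume $\deg_{t_i} P < a_i$ for every $i$.

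Next I induct on $n$. The base case $n=1$ is immediate from the fact that a nonzero univariate polynomial of degree $d$ has at most $d$ roots, giving $\uu_A \geq a_1 - d$, which matches the (extended) value of $\mm(a_1;a_1-d)$. For $n \geq 2$, expand
\[ P = \sum_{j=0}^{D} Q_j(t_1,\ldots,t_{n-1})\, t_n^j \]
with $Q_D \neq 0$; then $D \leq \deg P$ and $\deg Q_D \leq \deg P - D$. Since $P$ is reduced, so is $Q_D$, and a standard multivariate Lagrange interpolation argument shows that a nonzero reduced polynomial cannot vanish identically on $A' := A_1 \times \cdots \times A_{n-1}$. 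Hence $S := \{x' \in A' : Q_D(x') \neq 0\}$ is nonempty, and the inductive hypothesis applied to $Q_D$ yields
\[ \# S \geq \mm\bigl(a_1,\ldots,a_{n-1};\, (a_1+\cdots+a_{n-1}) - (\deg P - D)\bigr). \]
For each $x' \in S$, the univariate polynomial $P(x',t_n)$ has degree exactly $D$, with $D < a_n$ by the reduction, so it has at most $D$ roots in $A_n$ and contributes at least $a_n - D$ elements to $\U_A$. Combining,
\[ \uu_A \geq (a_n - D)\cdot \mm\bigl(a_1,\ldots,a_{n-1};\, (a_1+\cdots+a_{n-1}) - (\deg P - D)\bigr). \]

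To finish, I would interpret the right-hand side via the balls-in-bins setup of $\S 2.1$: it equals the product $y_1 y_2 \cdots y_n$ for the particular distribution with $y_n = a_n - D$ and $(y_1,\ldots,y_{n-1})$ an optimal (minimum-product) distribution of $(a_1+\cdots+a_{n-1}) - (\deg P - D)$ balls among the first $n-1$ bins. Since $1 \leq y_n \leq a_n$, this is a bona fide (not necessarily optimal) distribution of $(a_1+\cdots+a_n) - \deg P$ balls into all $n$ bins, so its product is at least $\mm(a_1,\ldots,a_n;\,(a_1+\cdots+a_n)-\deg P)$, which is the bound sought. The step I expect to be the main obstacle is the careful bookkeeping around the extended-constant convention for $\mm$: when $\deg P - D$ falls outside the ``generic'' range $n-1 \leq N \leq a_1 + \cdots + a_{n-1}$, the inductive hypothesis is invoked on an extended value, and one must verify that the final combinatorial comparison continues to hold in those boundary regimes. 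Everything else is bookkeeping.
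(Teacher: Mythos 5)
The paper does not prove this statement itself; it simply cites \cite[Thm.~5]{Alon-Furedi93}. Your argument --- reduce modulo the $\prod_{a\in A_i}(t_i-a)$ so that every $t_i$-degree is $<a_i$, then peel off the last variable and induct --- is essentially the original Alon--F\"uredi proof (they phrase things in terms of covering a grid by hyperplanes, but the combinatorial core is the same decomposition), so there is no genuinely new route here, only a self-contained write-up of what the paper delegates to the bibliography.

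The one place you flag as the ``main obstacle'' does check out, and the verification is short. Set $T' = (a_1+\cdots+a_{n-1}) - (\deg P - D)$ and $T = a_1+\cdots+a_n - \deg P$, so that $T = T' + (a_n - D)$ and, after the reduction, $1 \le a_n - D \le a_n$. If $T'$ lies in the generic range $n-1 \le T' \le a_1+\cdots+a_{n-1}$, then appending $y_n = a_n - D$ to an optimal distribution $(y_1,\ldots,y_{n-1})$ of $T'$ balls in the first $n-1$ bins yields a valid distribution of $T$ balls in all $n$ bins, whence $\mm(a_1,\ldots,a_n;T) \le (a_n-D)\,\mm(a_1,\ldots,a_{n-1};T')$, exactly what you need. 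If $T' < n-1$ then $\mm(a_1,\ldots,a_{n-1};T')=1$ by the left-extension convention and it suffices that $\mm(a_1,\ldots,a_n;T) \le a_n - D$: either $T<n$, in which case $\mm=1$, or $n \le T < (n-1)+(a_n-D)$, in which case the distribution $(1,\ldots,1,T-n+1)$ is admissible with product $T-n+1 < a_n-D$. The remaining boundary $T' > a_1+\cdots+a_{n-1}$ would force $\deg P < D$ and cannot occur. So the comparison holds in every regime, your induction closes, and the proof is correct.

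Two tiny stylistic notes: in the reduction step what actually matters (and all you use) is that the \emph{total} degree does not increase, not that it strictly drops, and the bound from the inductive hypothesis is $\mm(a_1,\ldots,a_{n-1}; a_1+\cdots+a_{n-1}-\deg Q_D)$, which you then relax to the stated one via $\deg Q_D \le \deg P - D$ and the monotonicity of $\mm$ in $N$; it is worth saying that explicitly.
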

\begin{proof}
See \cite[Thm. 5]{Alon-Furedi93}.
\end{proof}

\subsection{The Schanuel-Brink Operator}
\textbf{} \\ \\ \noindent
Let $p$ be a prime number. For $1 \leq i \leq n$, let $A_i$ be a set of coset representatives of $p\Z$ in $\Z$; put $A = \prod_{i=1}^n A_i$.  In \cite{Schanuel74}, Schanuel proves the following result.


\begin{lemma}
\label{SCHANLEMMA}
Let $v \in \Z^+$, and let $f \in \Z/p^v \Z[t] = \Z/p^v \Z[t_1,\ldots,t_n]$ be a polynomial of degree $d$.  There are polynomials  
$f_1,\ldots,f_v \in \Z/p\Z[t]$ of degrees $d,pd,\ldots,p^{v-1} d$ such that 
for all $x \in A$, $f(x) \equiv 0 \pmod{p^v}$ iff $f_i(x) \equiv 0 \pmod{p}$ for all $1 \leq i \leq v$.
\end{lemma}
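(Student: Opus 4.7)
The plan is to proceed by induction on $v$. The base case $v = 1$ is trivial: take $f_1 \in \F_p[t]$ to be the reduction of any integer lift of $f$ modulo $p$; then $\deg f_1 \leq d$ and the desired equivalence is tautological.

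For the inductive step, lift $f$ to $F \in \Z[t]$ with $\deg F \leq d$ and set $f_1 := F \bmod p$. Since $F(x) \equiv 0 \pmod{p^v}$ is equivalent to the conjunction of $F(x) \equiv 0 \pmod p$ and $F(x)/p \equiv 0 \pmod{p^{v-1}}$, the task reduces to producing a single polynomial $F' \in \Z[t]$ of degree $\leq pd$ such that, for each $x \in A$ with $F(x) \equiv 0 \pmod p$,
\[ F(x)/p \equiv F'(x) \pmod{p^{v-1}}. \]
Once $F'$ is in hand, the inductive hypothesis applied to $F' \bmod p^{v-1}$ (a polynomial of degree $\leq pd$) yields $g_1,\ldots,g_{v-1} \in \F_p[t]$ of degrees $\leq pd, p^2 d, \ldots, p^{v-1}d$, and I set $f_{j+1} := g_j$. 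The sequence $f_1,\ldots,f_v$ then has the required degrees and equivalence property.

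The construction of $F'$ --- the Schanuel-Brink operator applied to $F$ --- combines two ingredients. First, the polynomial identity $F(t)^p - F(t^p) = p\,\Delta(t)$ in $\Z[t]$, where $\Delta(t) \in \Z[t]$ has degree $\leq pd$; this holds because the cross-term multinomial coefficients $\binom{p}{k_1,\ldots,k_r}$ (with no $k_i=p$) are divisible by $p$, combined with Fermat's congruence $c^p \equiv c \pmod p$ applied to the integer coefficients of $F$. Second, since each $A_i$ is a $p$-element set of coset representatives of $p\Z$ in $\Z$, the integer-valued function $x_i \mapsto (x_i^p - x_i)/p$ on $A_i$ is interpolated by a polynomial $\mu_i(t_i) \in \Z[t_i]$ of degree $\leq p-1$ via Lagrange interpolation. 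Now expand $F(x^p) = F(x + py)$ by the Taylor formula (exact for polynomials) in the integer variables $y_i := (x_i^p - x_i)/p$, substitute $\mu_i(x_i)$ for $y_i$, and use the identity to eliminate $F(x^p)$; solving for $F(x)/p$ modulo $p^{v-1}$ yields $F'$ explicitly. A term-by-term count confirms $\deg F' \leq pd$: $\Delta$ contributes degree $pd$, while each Taylor correction $\mu(t)^\beta \, \partial^\beta F(t)$ has degree at most $(p-1)|\beta| + (d-|\beta|) \leq (p-1)d < pd$.

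The main obstacle is tracking $p$-adic precision when $v > p$. The clean case $v \leq p$ uses the fact that $F(x) \equiv 0 \pmod p$ forces $F(x)^p \equiv 0 \pmod{p^v}$, so the $F(x)^p$ term drops out of the computation entirely. For $v > p$, the term $F(x)^p/p = p^{p-1}(F(x)/p)^p$ is nonzero modulo $p^{v-1}$ and must be handled either by carrying the extra correction through the Taylor expansion or by iterating the operator within the inductive step so that its effect is absorbed by the next polynomial produced. In either case, the bound $\deg F' \leq pd$ is preserved at each stage, giving the required degree sequence $d, pd, \ldots, p^{v-1}d$.
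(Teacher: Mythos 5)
Your construction is essentially the paper's Schanuel--Brink operator (the paper defines $\Delta f = \bigl(f(t)^q - f(\sigma_1(t_1),\ldots,\sigma_n(t_n))\bigr)/\pi$, with $\sigma_i(t_i) = t_i^q + \pi\tau_i(t_i)$ satisfying $\sigma_i(a_i) = a_i$ for $a_i \in A_i$; your $F'$ is a computation of $-\Delta F$ in the $K = \Q$, $q = p$ case with the substitution $\sigma_i$ unpacked via Taylor's formula), and the inductive skeleton --- lift $f$, produce a degree-$\leq pd$ polynomial that ``strips off one factor of $p$,'' apply the inductive hypothesis --- is exactly right. But there is a genuine gap in how you justify the inductive step, and you correctly sense it in your final paragraph. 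You require the exact congruence $F'(x) \equiv F(x)/p \pmod{p^{v-1}}$, which holds only for $v \leq p$: the discarded term $F(x)^p/p$ has $p$-adic valuation at least $p-1$, which is no longer $\geq v-1$ once $v > p$. Neither of your proposed remedies closes the gap, because $F(x)^p/p = p^{p-1}\bigl(F(x)/p\bigr)^p$ is not a polynomial of controllable degree in $x$ and involves the very quantity $F(x)/p$ you are trying to represent, so it cannot be ``carried through the Taylor expansion'' or ``absorbed by the next polynomial'' without circularity.

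The correct fix, and what the paper uses in Lemma~\ref{DELTALEMMA}(e), is to drop the exact congruence and argue with valuations instead. For nonzero $c \in \Z_{(p)}$ with $v_p(c) \geq 1$, one has $v_p(c^p) = p\,v_p(c) > v_p(c)$, hence $v_p(c^p - c) = v_p(c)$, hence $v_p\bigl((c^p - c)/p\bigr) = v_p(c) - 1$. This holds for \emph{every} $v$ and never requires $c^p/p$ to vanish modulo any power of $p$. Applied with $c = F(x)$ for $x \in A$ (the case $F(x) = 0$ being trivial), it gives directly that $F(x) \equiv 0 \pmod{p^v}$ iff $F(x) \equiv 0 \pmod p$ and $(\Delta F)(x) \equiv 0 \pmod{p^{v-1}}$, and your induction then goes through verbatim. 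Two minor points: your interpolating polynomials $\mu_i$ live in $\Z_{(p)}[t_i]$ rather than $\Z[t_i]$ (the Lagrange denominators $a_i - b_i$ are coprime to $p$, not units of $\Z$), which is harmless since only the reduction mod $p$ matters; and the Taylor coefficients should be understood as Hasse derivatives $\partial^\beta F/\beta!$, which are integral even when $|\beta| \geq p$. The paper's packaging via the single substitution $t_i \mapsto \sigma_i(t_i)$ sidesteps the Taylor expansion entirely and makes the degree bound and integrality of $\Delta F$ transparent at a glance.
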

\noindent
Since the sum of the degrees of the $f_i$'s in Lemma \ref{SCHANLEMMA} is 
$d+pd+ \ldots + p^{v-1} d = (\frac{p^v-1}{p-1})d$, Lemma \ref{SCHANLEMMA} reduces Theorem \ref{SCHANTHM}a) to the $q = p$ case of Chevalley's Theorem. 
\\ \\
Although the statement concerns only finite rings, all known proofs use characteristic $0$ constructions.  Schanuel's proof works in the ring of $p$-adic integers 
$\Z_p = \varprojlim \Z/p^n \Z$: as he mentions, it is really motivated by the theory of \textbf{Witt vectors} but can be -- and was -- presented in a self-contained way.  In \cite{Brink11}, Brink generalized and simplified Schanuel's construction (actually some of Brink's simplifications have already been incorporated in our statement of Lemma \ref{SCHANLEMMA}; Schanuel spoke of solutions with coordinates in the set of Teichm\"uller representatives for $\F_p$ in $\Z_p$) by working in the localization of $\Z$ at the 
prime ideal $(p)$, namely 
\[ \Z_{(p)} = \left\{ \frac{a}{b} \in \Q \text{ such that } p \nmid b \right\}. \]
Following Schanuel, Brink introduces an operator (which depends on the choice of $A$, though we suppress it from the notation)
\[ \Delta: \Z_{(p)}[t] \ra \Z_{(p)}[t] \]
such that $\deg \Delta(f) \leq p \deg f$ and for all $x \in A$ we have $f(x) \equiv 0 \pmod{p^v}$ iff $(\Delta^i f)(x) \equiv 0 \pmod{p}$ 
for $0 \leq i \leq v-1$.  This is all we need to prove Theorem \ref{SRVW2THM} in the $q = p$ case.  Since this is the only case which gets applied in $\S$ 4, readers who are more interested in combinatorics than algebraic number theory may wish to move on to the next section.  However, we wish to state Theorem \ref{SRVW2THM} so that it includes Warning's Second Theorem over $\F_q$ and to deduce a suitable strengthening of Brink's Theorem from it, and this necessitates the following setup.  
\\ \\
Let $K$ be a number field with ring of integers $R$.  Let $\pp$ be a prime ideal of $R$, so $R/\pp \cong \F_q$ for a prime power $q = p^{\ell}$.  Let $R_{\pp}$ 
be the localization of $R$ at the prime ideal $\pp$, which is a discrete valuation ring with discrete valuation $v_{\pp}$.  Let $\pi$ in $R$ be 
such that $v_{\pp}(\pi) = 1$, so $\pp R_{\pp} = \pi R_{\pp}$.  
\\ \\
For $1 \leq i \leq n$, let $\varnothing \neq A_i \subset R$ be such that 
distinct elements of $A_i$ are incongruent modulo $\pp$.  (So $\# A_i \leq q$ for all $i$.)  Put $A = \prod_{i=1}^n A_i$.  For $1 \leq i \leq n$, there is $\tau_i(x) \in K[x]$ of degree less than $q$ such that $\tau_i(a_i) = \frac{a_i-a_i^q}{\pi}$ for all $a_i \in A_i$: 
\[ \tau_i(x) = \sum_{a_i \in A_i} \frac{a_i-a_i^q}{\pi} \prod_{b_i \in A_i \setminus \{a_i\}} \frac{x-b_i}{a_i-b_i}. \]
This formula makes clear that $\tau_i(x) \in R_{\pp}[x]$.  For $1 \leq i \leq n$, put 
\[ \sigma_i(x) = x^q + \pi \tau_i(x). \]
It follows that: \\
$\bullet$ $\sigma_i(x) \in R_{\pp}[x]$; \\
$\bullet$ $\deg \sigma_i = q$; \\
$\bullet$ for all $a_i \in A_i$, $\sigma_i(a_i) = a_i$; and \\
$\bullet$ $\sigma_i(x) \equiv x^q \pmod {\pp R_{\pp}[x]}$.
\\ \\
We define the \textbf{Schanuel-Brink operator} $\Delta: K[t_1,\ldots,t_n] \ra K[t_1,\ldots,t_n]$ by 
\[ \Delta:  
f(t_1,\ldots,t_n) \mapsto \frac{f(t_1,\ldots,t_n)^q-f(\sigma_1(t_1),\ldots,\sigma_n(t_n))}{\pi}. \]

\begin{lemma}(Properties of the Schanuel-Brink Operator) 
\label{DELTALEMMA} \\
a) For all $f \in K[t]$, $\deg \Delta(f) \leq q \deg f$. \\
b) If $c \in K$, then $\Delta(c) = \frac{c^q-c}{\pi}$. \\
c) For all $f \in R_{\pp}[t]$, we have $\Delta(f) \in R_{\pp}[t]$. \\
d) For all $f \in R_{\pp}[t]$, $a = (a_1,\ldots,a_n) \in A$, $i \in \Z^+$, we have $(\Delta^i f)(a) = \Delta^i (f(a))$.  \\
e) For all $c \in R_{\pp}$ and $v \in \Z^+$, the following are equivalent: \\
(i) $c \equiv 0 \pmod{\pp^v}$.  \\
(ii) We have $c,\Delta c,\ldots, \Delta^{v-1}(c) \equiv 0 \pmod \pp$.  
\end{lemma}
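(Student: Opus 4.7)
The plan is to treat (a)--(d) as routine checks that flow directly from the construction of the $\sigma_i$ and the defining formula for $\Delta$, and to concentrate effort on (e), which is the only step requiring genuine valuation-theoretic input. Part (a) follows since $\deg(f^q) = q \deg f$ and $\deg f(\sigma_1,\ldots,\sigma_n) \le q \deg f$ because each $\sigma_i$ has degree $q$. Part (b) is immediate from the definition, as substituting the $\sigma_i$ leaves a constant unchanged. For (c), I would show the numerator of $\Delta f$ is divisible by $\pi$ in $R_\pp[t]$ by reducing mod $\pi$: the relation $\sigma_i(x) \equiv x^q \pmod{\pi R_\pp[x]}$ gives $f(\sigma_1,\ldots,\sigma_n) \equiv f(t_1^q,\ldots,t_n^q) \pmod{\pi}$, while in characteristic $p$ the $q$-th power map is a ring homomorphism, and combined with $c^q \equiv c \pmod{\pi}$ for $c \in R_\pp$ (Fermat in the residue field $\F_q$) this yields $f^q \equiv f(t_1^q,\ldots,t_n^q) \pmod{\pi}$. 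The two sides agree mod $\pi$, so $\Delta f \in R_\pp[t]$.

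For (d), the $i = 1$ case uses $\sigma_i(a_i) = a_i$ for $a_i \in A_i$: evaluating the numerator at $a \in A$ produces $f(a)^q - f(a)$, whose quotient by $\pi$ is $\Delta(f(a))$ by (b). The general case then follows by straightforward induction on $i$, using (c) at each step to ensure $\Delta^i f \in R_\pp[t]$ so that the next application of $\Delta$ is well-behaved.

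The substantive step is (e), and the content rests on the following calculation: for $c \in R_\pp$ with $v_\pp(c) = m \ge 1$, we have $v_\pp(c^q) = qm > m$, so $v_\pp(c^q - c) = m$, whence $v_\pp(\Delta c) = m - 1$. Iterating gives $v_\pp(\Delta^i c) = m - i$ for $0 \le i \le m$, and in particular $v_\pp(\Delta^m c) = 0$ whenever $m \ge 1$. Both directions of the equivalence then follow by inspection. If $v_\pp(c) \ge v$, then $v_\pp(\Delta^i c) \ge v - i \ge 1$ for $0 \le i \le v - 1$. Conversely, if $v_\pp(c) = m < v$, then either $m = 0$, in which case $c \not\equiv 0 \pmod{\pp}$, or $1 \le m \le v - 1$ and $v_\pp(\Delta^m c) = 0$; either way the hypothesis that $\Delta^i c \equiv 0 \pmod{\pp}$ for all $0 \le i \le v - 1$ fails. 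The only delicacy I anticipate is keeping the base case $m = 0$ separate from the inductive regime $m \ge 1$, and making sure the induction in (d) never leaves $R_\pp[t]$---precisely what part (c) is set up to guarantee.
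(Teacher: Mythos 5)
Your argument follows the paper's proof essentially line for line: parts (a)--(b) are immediate, (c) is established by reducing the numerator mod $\pp$ and invoking Frobenius together with $x^q = x$ on $\F_q$, (d) uses $\sigma_i(a_i) = a_i$ for the base case and induction (with (c) guaranteeing integrality) for the rest, and (e) rests on the single valuation computation $v_\pp(\Delta c) = v_\pp(c) - 1$ for $v_\pp(c) \geq 1$. The proposal is correct and differs from the paper's proof only in expository detail.
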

\begin{proof}
Parts a) and b) are immediate.  \\
c) It is enough to show that the image in $\F_q[t]$ of 
$f(t)^q-f(\sigma_1(t_1),\ldots,\sigma_n(t_n))$ is zero.  In characteristic $p$ we have $(x+y)^p = x^p + y^p$, and applying this $a$ times gives $(x+y)^q = x^q + y^q$.  Since also $a^q = a$ for all $a \in \F_q$ it follows that for any 
\[ f(t) = \sum_I c_I t_1^{a_1} \cdots t_n^{a_n} \] 
we have that as elements of $\F_q[t]$,
\[ f(t)^q = \sum_I c_I t_1^{qa_1} \cdots t_n^{qa_n} = f(\sigma_1(t_1),\ldots,\sigma_n(t_n)). \] 
d) Since $\sigma_i(a_i) = a_i$ for all $a_i \in A_i$, 
\[ (\Delta f)(a) = \frac{ f(a_1,\ldots,a_n)^q - f(a_1,\ldots,a_n)}{\pi} = 
\Delta (f(a)),\]
establishing the $i = 1$ case.  The general case follows by induction. \\
e) If $c = 0$ then (i) and (ii) hold.  Each of (i) and (ii) implies $c \equiv 0 \pmod{\pp}$, so we may assume $c \neq 0$ and $c \equiv 0 \pmod{\pp}$.  Since $c \equiv 0 \pmod{\pp}$, 
$v_\pp(c) \geq 1$ and thus \[v_\pp(c^q) = q v_\pp(c) > v_\pp(c). \]  It follows that 
$v_\pp(c^q-c) = v_\pp(c)$ (if $\pp^{v_\pp(c)+1}$ divided $c^q-c$, then it would divide $c^q$ and hence it would divide $c$, contradiction) and thus 
\[ v_{\pp}(\Delta(c)) = v_\pp\left(\frac{c^q-c}{\pi}\right) = v_\pp(c^q-c) - 1 = v_\pp(c) -1.\]
The equivalence (i) $\iff$ (ii) follows.
\end{proof}
\noindent
The following immediate consequence is the main result of this section.

\begin{cor}
\label{DELTACOR} 
For all $f \in R_{\pp}[t]$, $a \in A$ 
and $v \in \Z^+$, we have $f(a) \equiv 0 \pmod{\pp^v}$ iff 
$(\Delta^i f)(a) \equiv 0 \pmod{\pp}$ for all $0 \leq i \leq v-1$. 
\end{cor}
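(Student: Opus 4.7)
The plan is to observe that the Corollary is an immediate consequence of combining parts (c), (d), and (e) of Lemma \ref{DELTALEMMA}. I would fix $f \in R_{\pp}[t]$, $a \in A$, and $v \in \Z^+$, and set $c = f(a) \in R_{\pp}$. The key bridge is Lemma \ref{DELTALEMMA}(d), which says that applying $\Delta$ to the polynomial and then evaluating at $a$ gives the same result as first evaluating and then applying $\Delta$ as a map $R_{\pp} \to R_{\pp}$; explicitly, $(\Delta^i f)(a) = \Delta^i(c)$ for every $i \geq 0$.

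Next I would invoke Lemma \ref{DELTALEMMA}(e) with this $c \in R_{\pp}$: the condition $c \equiv 0 \pmod{\pp^v}$ is equivalent to the $v$ conditions $\Delta^i(c) \equiv 0 \pmod{\pp}$ for $0 \leq i \leq v-1$. Translating back through the identity from (d), these $v$ conditions are precisely $(\Delta^i f)(a) \equiv 0 \pmod{\pp}$ for $0 \leq i \leq v-1$. Chaining the two equivalences yields the Corollary.

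There is essentially no main obstacle here — all of the real content has already been packed into Lemma \ref{DELTALEMMA}. The only subtlety worth flagging is that the iterated evaluation identity in (d) implicitly requires $\Delta^i f$ to lie in $R_{\pp}[t]$ at each stage, so that (d) can be applied repeatedly; this is exactly ensured by part (c), which is closed under iteration. With that remark in place, the proof reduces to two lines citing (d) and (e).
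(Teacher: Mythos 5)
Your proof is correct and matches the paper's approach exactly: the paper labels the corollary an ``immediate consequence'' of Lemma \ref{DELTALEMMA}, and the intended argument is precisely what you wrote, chaining parts (d) and (e) via $c = f(a)$, with part (c) guaranteeing that the iterates stay in $R_{\pp}[t]$ so the induction underlying (d) goes through.
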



\section{The Restricted Variable Warning's Second Theorem}

\subsection{Brink's Theorem in a Number Field}

\begin{thm}
\label{NFBRINKTHM}
Let $K$ be a number field with ring of integers $R$, let $\pp$ be a nonzero prime ideal of $R$, and let $q = p^{\ell}$ be the prime power such that 
$R/\pp \cong \F_q$.  Let $P_1(t_1,\ldots,t_n),\ldots,P_r(t_1,\ldots,t_n) \in R[t_1,\ldots,t_n]$, let $v_1,\ldots,v_r \in \Z^+$, and 
let $A_1,\ldots,A_n$ be nonempty subsets of $R$ such that for each $i$, 
the elements of $A_i$ are pairwise incongruent modulo $\pp$, and put 
$A = \prod_{i=1}^n A_i$.  Let 
\[ Z_A = \{x \in A \mid P_j(x) \equiv 0 \pmod{\pp^{v_j}} \ \forall 1 \leq j \leq r \}, \ \zz_A = \# Z_A. \]
a) If $\sum_{j=1}^r (q^{v_j}-1)\deg(P_j) < \sum_{i=1}^n \left( \#A_i - 1 \right)$, then $\zz_A \neq 1$. \\
b) (\textbf{Boolean Case}) If $A = \{0,1\}^n$ and 
\[ \sum_{j=1}^r (q^{v_j}-1)\deg(P_j) < n, \]
then $\zz_A \neq 1$.  
\end{thm}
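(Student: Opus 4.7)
The plan is to reduce the theorem to the Restricted Variable Chevalley Theorem (Theorem \ref{RESVARCHEV}) by using the Schanuel--Brink operator to convert each congruence modulo $\pp^{v_j}$ into $v_j$ congruences modulo $\pp$, at the cost of inflating degrees in a controlled way.

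First, since each $P_j \in R[t] \subseteq R_\pp[t]$, Lemma \ref{DELTALEMMA}c) gives $\Delta^i P_j \in R_\pp[t]$ for all $i \geq 0$, with $\deg(\Delta^i P_j) \leq q^i \deg(P_j)$ by Lemma \ref{DELTALEMMA}a). By Corollary \ref{DELTACOR}, for each $x \in A$, the condition $P_j(x) \equiv 0 \pmod{\pp^{v_j}}$ is equivalent to $(\Delta^i P_j)(x) \equiv 0 \pmod{\pp}$ for all $0 \leq i \leq v_j - 1$. Thus $Z_A$ is the solution set in $A$ of the enlarged system $\{\Delta^i P_j : 1 \leq j \leq r,\ 0 \leq i \leq v_j - 1\}$ of $\sum_j v_j$ congruences, all modulo the single prime $\pp$.

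Next, I reduce modulo $\pp$: the natural map $R_\pp \to R_\pp/\pp R_\pp \cong R/\pp \cong \F_q$ extends to $R_\pp[t] \to \F_q[t]$; call the image of $\Delta^i P_j$ by $\overline{P}_{j,i}$. The incongruence assumption on each $A_i$ means the reduction $\overline{A}_i \subseteq \F_q$ has $\#\overline{A}_i = \#A_i$, and reduction restricts to a bijection $A \to \overline{A} := \prod_i \overline{A}_i$ under which the system of mod-$\pp$ congruences on $A$ corresponds to the system of equations $\overline{P}_{j,i}(\overline{x}) = 0$ on $\overline{A}$. So $\zz_A$ equals the cardinality of this new zero-set over $\F_q$ in $\overline{A}$.

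Finally, I apply Theorem \ref{RESVARCHEV} to the system $\{\overline{P}_{j,i}\}$ over $\F_q$ with restricted variable sets $\overline{A}_i$. The required degree bound is
\[
\sum_{j=1}^r \sum_{i=0}^{v_j-1} (q-1)\deg(\overline{P}_{j,i}) \leq \sum_{j=1}^r (q-1)\deg(P_j) \sum_{i=0}^{v_j-1} q^i = \sum_{j=1}^r (q^{v_j}-1)\deg(P_j),
\]
which by hypothesis is strictly less than $\sum_{i=1}^n(\#A_i - 1) = \sum_{i=1}^n(\#\overline{A}_i - 1)$. Hence Theorem \ref{RESVARCHEV} gives $\zz_A \neq 1$, proving (a). Part (b) is immediate from (a) on taking $A_i = \{0,1\}$ (these are pairwise incongruent mod any prime ideal), so that $\sum_i(\#A_i - 1) = n$. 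There is no serious obstacle beyond bookkeeping; the only thing to verify carefully is the geometric sum $1 + q + \cdots + q^{v_j - 1} = (q^{v_j}-1)/(q-1)$, which precisely matches the numerical hypothesis to the degree hypothesis of Theorem \ref{RESVARCHEV}.
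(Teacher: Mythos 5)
Your proof is correct, but it takes a genuinely different route from the paper's. The shared step is the Schanuel--Brink reduction: both you and the paper use Corollary \ref{DELTACOR} and Lemma \ref{DELTALEMMA}a) to replace each congruence $P_j \equiv 0 \pmod{\pp^{v_j}}$ by the system $\Delta^i P_j \equiv 0 \pmod{\pp}$, $0 \le i \le v_j - 1$, then reduce mod $\pp$ to land in $\F_q[t]$, with the geometric series $1 + q + \cdots + q^{v_j-1} = (q^{v_j}-1)/(q-1)$ converting the degree budget. Where you diverge is in the final step. You hand the reduced system to the Restricted Variable Chevalley Theorem (Theorem \ref{RESVARCHEV}), which the paper quotes from Schauz and Brink and whose proof rests on the Coefficient Formula sharpening of Alon's Combinatorial Nullstellensatz. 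This is essentially Brink's own argument carried over to number fields, which the paper explicitly declines to replicate: it instead derives Theorem \ref{NFBRINKTHM} as a corollary of the quantitative Theorem \ref{SRVW2THM}a) (proved via the Alon--F\"uredi theorem on the number of nonvanishing points of a polynomial on a grid) combined with the trivial pigeonhole Lemma \ref{OBVIOUSLEMMA} showing $\mathfrak{m}(a_1,\ldots,a_n;N) \ge 2$ once $N > n$. Your route is shorter and more self-contained if one already has Theorem \ref{RESVARCHEV} in hand; the paper's route has the advantage of exhibiting Brink's theorem as a qualitative shadow of the quantitative lower bound $\zz_A \ge \mathfrak{m}(\#A_1,\ldots,\#A_n;\cdot)$, which is the whole point of the paper, and of avoiding any appeal to the Coefficient Formula, so that the Alon--F\"uredi theorem is the sole polynomial-method input. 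One small point worth being explicit about: when you write $\deg \overline{P}_{j,i} \le q^i \deg P_j$, you are using both that $\Delta$ multiplies degree by at most $q$ and that reduction mod $\pp$ cannot increase degree; also, if some $\overline{P}_{j,i}$ happens to vanish identically it can simply be dropped from the system, which only loosens the degree hypothesis.
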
  
\noindent
Brink states and proves Theorem \ref{BIGBRINKTHM} in the $K = \Q$ case \cite[Thm. 2]{Brink11}.  Theorem \ref{NFBRINKTHM} is stated on page 130 of his paper.  Having carried over the Schanuel-Brink operator to number fields, Brink's proof applies verbatim.  Rather than replicate this argument, we will deduce Theorem \ref{NFBRINKTHM} as a consequence of Theorem \ref{SRVW2THM}.


\subsection{Proof of The Restricted Variable Warning's Second Theorem}


\begin{proof} 
a) Step 1: Suppose each $v_i = 1$.  Put 
\[ P(t) = \prod_{i=1}^r (1-P_i(t)^{q-1}). \]
Then $\deg P = (q-1)(\deg(P_1) + \ldots + \deg(P_r))$, and 
\[ \mathcal{U}_A = \{x \in A \mid P(x) \neq 0\} = Z_A, \]
so 
\[ z_A = \# Z_A = \# \mathcal{U}_A = \uu_A. \]
Applying the Alon-F\"uredi Theorem we get $\zz_A = 0$ or 
\[ \zz_A \geq \mathfrak{m}(\# A_1+\ldots+\# A_n;\# A_1 + \ldots + \# A_n - (q-1)d). \]
Step 2: Let $a \in A$ and $f \in R_{\pp}[t_1,\ldots,t_n]$.  By Corollary \ref{DELTACOR},
\[ f(a) \equiv 0 \pmod{q^{v_i}}  \iff (\Delta^i f)(a) \equiv 0 \ \forall \ i \leq v_i -1. \]
Moreover, by Lemma \ref{DELTALEMMA}a), $\deg \Delta^i f \leq q^i \deg f$.  Thus 
for each $1 \leq j \leq r$, we have exchanged the congruence $P_j \equiv 0 \pmod{\pp^{v_j}}$ for the system of congruences 
\[ P_j \equiv 0 \pmod{\pp}, \ \Delta P_j \equiv 0 \pmod{\pp}, \ldots, 
\Delta^{v_j-1} P_j \equiv 0 \pmod{\pp} \]
of degrees at most $\deg P_j,\  q \deg P_j,\ldots, q^{v_j-1} \deg P_j$.  Hence
the sum of the degrees of all the polynomial congruences is at most 
\[ \sum_{j=1}^r (1+q+ \ldots + q^{v_j-1}) \deg P_j = \sum_{j=1}^r \frac{q^{v_j}-1}{q-1}\deg(P_j). \]
%
%
%
Apply Step 1.  \\
b) To recover Theorem \ref{WARNING2}: for all $i$, take $A_i$ to be a set of coset representatives for $\pp R$ in $R$, so 
$\# A_i = q$ for all $i$.  Let $k = n- (d_1+\ldots+d_r) = n-d$, so
\[ \# A_1+\ldots+\# A_n - \deg P = nq - (q-1)d = kq+n-k . \]
Lemma \ref{FSLEMMA}b) gives
\[ \mathfrak{m}(\# A_1,\ldots,\# A_n;\# A_1 + \ldots + \# A_n - \deg P) = 
\mathfrak{m}(q,\ldots,q;kq+n-k) 
= q^k = q^{n-d}. \] 
To recover Theorem \ref{NFBRINKTHM}: apply Lemma \ref{OBVIOUSLEMMA} and 
part a).  \\
c) For all $i$ take $A_i = \{0,1\}$.  Lemma \ref{FSLEMMA}c) gives 
\[ \mathfrak{m}(\# A_1,\ldots,\# A_n;\# A_1 + \ldots + \# A_n - \deg P) = 
\mathfrak{m}(2,\ldots,2;2n- \sum_{j=1}^r (q^{v_j}-1)\deg(P_j)) \] \[ =  2^{n-\sum_{j=1}^r (q^{v_j} -1) \deg(P_j)}. \qedhere \] 
\end{proof}

\subsection{Deductions From the Unrestricted Cases}
\textbf{} \\ \\ \noindent
Schanuel proved part b) of Theorem \ref{SCHANTHM} by applying part a) to the polynomials $P_j(t_1^{p-1},\cdots,t_n^{p-1})$: this works since for all 
$x \in \F_p$, $x^{p-1} \in \{0,1\}$.  He proved part c) by applying part a) 
to the polynomials $P_j(t_{1,1}^{p-1} + \ldots + t_{1,b_1}^{p-1},\ldots,
t_{n,1}^{p-1} + \ldots + t_{n,b_n}^{p-1})$ in the $b_1 + \ldots + b_r$ variables 
$t_{1,1},\ldots,t_{1,b_1},\ldots,t_{n,1},\ldots,t_{n,b_n}$.  In particular, the case of Theorem \ref{SCHANTHM}b) in which all congruences are 
modulo $p$ is reduced to Chevalley's Theorem.  This substitution underlies many of the combinatorial applications of the Chevalley-Warning Theorem, e.g. \cite{Bailey-Richter89}: see $\S$ 4.4. 

\begin{ques}
\label{QUES1}
For which $A = \prod_{i=1}^n A_i \subset \F_q^n$ can one deduce the Restricted Variable Chevalley Theorem (Theorem \ref{RESVARCHEV}) from its unrestricted version (Theorem \ref{WARTHM}a))? 
\end{ques}
\noindent
We turn to Warning's Second Theorem.  Since the bound obtained in Theorem \ref{SRVW2THM} is in terms of the combinatorially defined 
quantity $\mm(a_1,\ldots,a_n;N)$, it is natural to wonder to what extent Theorem \ref{SRVW2THM} could be deduced from Theorem \ref{WARNING2} 
by purely combinatorial arguments.  Consider again $A = \{0,1\}^n$.  It turns out that some work has been done on this problem: in Theorem \ref{SRVW2THM}c), take 
$r = v_1 = 1$ and $q = p$, write $P$ for $P_1$, and put $d = \deg P$, so 
\begin{equation}
\label{DEDUCTIONEQ1}
\zz_{ \{0,1\}^n} = 0 \text{ or } \zz_{ \{0,1\}^n} \geq 2^{n-(p-1)d}. 
\end{equation}
Using Warning's Second Theorem and purely combinatorial 
arguments, Chattopadhyay, Goyal, Pudl\'ak and Th\'erien showed \cite[Thm. 11]{CGPT06} that 
\begin{equation}
\label{DEDUCTIONEQ2} \zz_{ \{0,1\}^n} = 0 \text{ or } \zz_{ \{0,1\}^n} \geq 2^{n- (\log_2p)(p-1)d}. \end{equation}
For $p = 2$, (\ref{DEDUCTIONEQ1}) and (\ref{DEDUCTIONEQ2}) coincide with (\ref{WAR2EQ}).  For $p > 2$, (\ref{DEDUCTIONEQ1}) is an improvement of 
(\ref{DEDUCTIONEQ2}).

\section{Combinatorial Applications}

\subsection{The Davenport Constant and $g$-Sum Subsequences}
\textbf{} \\ \\ \noindent
Let $(G,+)$ be a nontrivial finite commutative group.  For $n \in \Z^+$, let 
$x = (x_1,\ldots,x_n) \in G^n$.  We view $x$ as a length $n$ sequence 
$x_1,\ldots,x_n$ of elements in $G$ and a subset $J \subset \{1,\ldots,n\}$ as giving a subsequence $x_J$ of $x$.  For $g \in G$, we say $x_J$ is a \textbf{g-sum subsequence} if $\sum_{i \in J} x_i = g$.  When $g = 0$ we speak of \textbf{zero-sum subsequences}.  
\\ \\
The \textbf{Davenport constant} $D(G)$ is the least $d \in \Z^+$ such that every $x \in G^d$ has a nonempty zero-sum subsequence.  The pigeonhole principle gives
\begin{equation}
\label{DAVENPORTEQ1}
 D(G) \leq \# G. 
\end{equation}
The Davenport constant arises naturally in the theory of factorization in integral domains.  We mention one result to show the flavor.

\begin{thm}
Let $K$ be a number field, let $R$ be its ring of integers, and let 
$\operatorname{Cl} R$ be the ideal class group of $R$.  For $x \in R$ 
nonzero and not a unit, let $L(x)$ (resp. $l(x)$) be the maximum (resp. the minimum) of all lengths of factorizations of $x$ into \emph{irreducible elements}, let 
\[ \rho(x) = \frac{L(x)}{l(x)}, \]
and let $\rho(R)$ be the supremum of $\rho(x)$ as $x$ ranges over nonzero 
nonunits.  \\
a) (Carlitz \cite{Carlitz60}) We have $\rho(R) = 1$ $\iff$ $\# \operatorname{Cl} R \leq 2$. \\
b) (Valenza \cite{Valenza90}) We have $\rho(R) = \max \left( \frac{D(\operatorname{Cl} R)}{2}, 1 \right)$.
\end{thm}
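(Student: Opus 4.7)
The plan is to recast factorizations in $R$ as partitions of zero-sum sequences in $G := \operatorname{Cl} R$ into \emph{minimal} zero-sum subsequences, and then read off the answer from the definition of the Davenport constant. Since $R$ is a Dedekind domain, any nonzero nonunit $x \in R$ has $(x) = \mathfrak{q}_1 \cdots \mathfrak{q}_n$ as a product of (not necessarily distinct) prime ideals; I would set $S(x) = ([\mathfrak{q}_1], \ldots, [\mathfrak{q}_n]) \in G^n$, a zero-sum sequence. The key observation is that factorizations $x = y_1 \cdots y_r$ into irreducibles (up to units and reordering) correspond to partitions $\{1, \ldots, n\} = J_1 \sqcup \cdots \sqcup J_r$ for which each $([\mathfrak{q}_j])_{j \in J_i}$ is a minimal zero-sum sequence in $G$: the zero-sum condition says $\prod_{j \in J_i} \mathfrak{q}_j$ is principal (generated by $y_i$), and minimality translates to irreducibility of $y_i$. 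Under this dictionary, $L(x)$ and $l(x)$ become the max and min numbers of blocks over all such partitions of $S(x)$.

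For the upper bound $\rho(x) \leq D(G)/2$ (assuming $|G| \geq 2$), I would argue that any zero entry of $S(x)$ must form its own singleton block (else $(0)$ would be a proper zero-sum subpiece of some block), while every non-zero block has length between $2$ and $D(G)$. Writing $S(x)$ as $k$ zeros plus $m$ non-zero entries and letting $L', l'$ be the max/min numbers of blocks in the non-zero part, this gives $L' \leq m/2$ and $l' \geq m/D(G)$, whence
\[ \rho(x) = \frac{k+L'}{k+l'} \leq \max\!\left(1, \frac{L'}{l'}\right) \leq \frac{D(G)}{2}. \]

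For the matching lower bound I would take a minimal zero-sum sequence $T = (g_1, \ldots, g_k)$ of length $k = D(G)$ in $G$ (so $-T$ is likewise minimal zero-sum) and invoke Landau's theorem that each ideal class of $R$ contains infinitely many prime ideals to pick distinct primes $\mathfrak{p}_1, \ldots, \mathfrak{p}_k, \mathfrak{q}_1, \ldots, \mathfrak{q}_k$ representing $g_1, \ldots, g_k, -g_1, \ldots, -g_k$; put $(x) = \prod_i \mathfrak{p}_i \mathfrak{q}_i$. The partition into pairs $\{\mathfrak{p}_i, \mathfrak{q}_i\}$ gives $L(x) \geq k$, while the halves-partition $\{\{\mathfrak{p}_i\}_i, \{\mathfrak{q}_i\}_i\}$ gives $l(x) \leq 2$, so $\rho(x) \geq D(G)/2$. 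Combined with the case $|G| = 1$ (where $R$ is a UFD, so $\rho(R) = 1 = \max(\tfrac{1}{2},1)$), this proves (b). Part (a) then reduces to the combinatorial fact that $D(G) \leq 2 \iff |G| \leq 2$: any group of order $\geq 3$ contains either an element of order $\geq 3$ or two independent elements of order $2$, each case yielding a minimal zero-sum sequence of length $\geq 3$.

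The main obstacle is making the dictionary of the first paragraph truly rigorous: one must verify that minimality of the zero-sum subsequence corresponds exactly to irreducibility of the corresponding factor, which rests on unique factorization of ideals in Dedekind domains and the benign action of $R^{\times}$ on factorizations. The essential external ingredient for the lower bound is Landau's theorem, without which one cannot realize a prescribed minimal zero-sum sequence in $G$ as the class sequence of a concrete element of $R$.
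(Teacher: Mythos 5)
The paper states this theorem purely as background (``We mention one result to show the flavor'') and cites it to Carlitz and Valenza without giving a proof, so there is no in-paper argument to compare against. Your proposal is correct, and it is essentially the standard transfer-to-block-monoid argument of Valenza (and the general framework later systematized in factorization theory): replace $x$ by the zero-sum sequence of ideal classes attached to the prime ideal factorization of $(x)$; observe that, by unique factorization of ideals and the benign action of units, irreducible factorizations of $x$ correspond bijectively to partitions of this sequence into \emph{minimal} zero-sum subsequences; bound each non-trivial block between size $2$ and $D(\operatorname{Cl} R)$; and realize the extremal ratio by choosing $2D(\operatorname{Cl} R)$ distinct primes in prescribed classes. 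The individual verifications go through: a minimal zero-sum sequence has length at most $D(\operatorname{Cl} R)$ because deleting any term leaves a sequence with no nonempty zero-sum subsequence; $0$-entries of $S(x)$ are forced into singleton blocks by minimality; and your witness $(x) = \prod_i \mathfrak{p}_i \mathfrak{q}_i$ has a pairs-partition giving $L(x) \geq D(\operatorname{Cl} R)$ and a halves-partition giving $l(x) \leq 2$ (since $T$ and $-T$ are both minimal). The one genuinely non-combinatorial ingredient is exactly the one you flag: the existence of (infinitely many) prime ideals in every class, which requires the prime ideal analogue of Dirichlet's theorem, usually credited to Hecke or obtained from the Chebotarev density theorem. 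A small point worth making explicit in a write-up: when $\# \operatorname{Cl} R \geq 2$ a minimal zero-sum sequence of length $D(\operatorname{Cl} R) \geq 2$ cannot contain the identity element (a singleton $0$ would be a proper zero-sum subsequence), so all your $g_i$ are nonzero and the pairs $\{\mathfrak{p}_i, \mathfrak{q}_i\}$ are genuinely minimal, as the pairs-partition implicitly requires.
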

\noindent
For any finite commutative group $G$, there are unique positive integers $r,n_1,\ldots,n_r$ with 
$1 < n_r \mid n_{r-1} \mid \ldots \mid n_1$ such that $G \cong \bigoplus_{i=1}^r \Z/n_i \Z$.  Put 
\[ d(G) = 1 + \sum_{i=1}^r (n_i-1). \]
Let $e_i \in \bigoplus_{i=1}^r \Z/n_i \Z$ be the element with $i$th coordinate 
$1$ and all other coordinates zero.  Then the sequence 
\[ \overbrace{e_1,\ldots,e_1}^{n_1-1},\overbrace{e_2,\ldots,e_2}^{n_2-1},
\ldots,\overbrace{e_r,\ldots,e_r}^{n_r-1} \]
shows that 
\begin{equation}
\label{DAVENPORTEQ2}
 d(G) \leq D(G). 
\end{equation}
Comparing (\ref{DAVENPORTEQ1}) and (\ref{DAVENPORTEQ2}) shows $D(G) = \# G = d(G)$ when $G$ is cyclic.   In 1969, J.E. Olson conjectured that $D(G) = d(G)$ for all $G$ and proved it in the following cases.

\begin{thm}(Olson)
\label{OLSONTHM}
For a finite commutative group $G$, $d(G) = D(G)$ holds if: \\
(i) $G$ is a direct product of \emph{two} cyclic groups; or \\
(ii) $G$ is a $p$-group (i.e., $\# G = p^a$ for some $a \in \Z^+$).
\end{thm}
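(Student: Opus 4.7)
The plan is to treat parts (ii) and (i) separately, since only (ii) falls within the polynomial method developed in this paper. For part (ii), I would write $G \cong \bigoplus_{i=1}^r \Z/p^{a_i}\Z$ with $a_1 \geq \cdots \geq a_r \geq 1$, so that $d(G) = 1 + \sum_{i=1}^r (p^{a_i}-1)$. The inequality $d(G) \leq D(G)$ is already (\ref{DAVENPORTEQ2}); the task is to establish $D(G) \leq d(G)$. To that end, fix an arbitrary sequence $x_1,\ldots,x_N$ in $G$ with $N = d(G)$, write $x_j = (x_{j,1},\ldots,x_{j,r})$ with each coordinate lifted to $\Z$, and introduce the degree-one polynomials
\[ P_i(t_1,\ldots,t_N) = \sum_{j=1}^N x_{j,i}\, t_j \in \Z[t_1,\ldots,t_N], \quad i = 1,\ldots,r. \]
A nonempty zero-sum subsequence of $(x_1,\ldots,x_N)$ corresponds bijectively to a nontrivial $\epsilon \in \{0,1\}^N$ satisfying $P_i(\epsilon) \equiv 0 \pmod{p^{a_i}}$ for all $i$.

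I would then apply the Boolean case of Theorem \ref{SRVW2THM}c) with $K = \Q$, $q = p$, $v_i = a_i$, and $\deg P_i = 1$. The all-zeros vector is automatically a solution, so $\zz_{\{0,1\}^N} \geq 1$; the theorem accordingly gives
\[ \zz_{\{0,1\}^N} \geq 2^{N - \sum_{i=1}^r (p^{a_i}-1)} = 2^1 = 2, \]
forcing at least one nontrivial Boolean solution and hence the desired zero-sum subsequence. The only thing to verify is the numerological identity $N - \sum_i (p^{a_i}-1) = 1$, which is immediate from the choice $N = d(G)$; beyond bookkeeping, there is essentially no obstacle here.

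For part (i), with $G \cong \Z/n_1\Z \oplus \Z/n_2\Z$ and $1 < n_2 \mid n_1$, the polynomial method does not adapt directly, because the two coordinate congruences live modulo moduli that are not powers of a single prime, so Theorem \ref{SRVW2THM} cannot treat them simultaneously. I would instead invoke Olson's original 1969 argument, whose heart is an iterative extraction of nonempty zero-sum subsequences in the first coordinate (using the cyclic case $D(\Z/n_1) = n_1$), followed by a second application of the cyclic case $D(\Z/n_2) = n_2$ to the sequence of second-coordinate sums thereby produced. The genuine difficulty, and the main obstacle to a self-contained treatment, is the combinatorial bookkeeping that guarantees enough disjoint extractions are available from a sequence of length $n_1 + n_2 - 1$; this lies outside the circle of ideas developed in the present paper.
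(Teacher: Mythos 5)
The paper's stated proof of Theorem~\ref{OLSONTHM} is a bare citation: part~(i) to \cite[Cor.~1.1]{Olson69b} and part~(ii) to \cite[(1)]{Olson69a}. Your proposal, by contrast, attempts to actually derive the result, so the comparison is not against a parallel argument in the paper so much as against the paper's later discussion.

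For part~(ii), your argument is correct and complete. You encode an arbitrary length-$d(G)$ sequence as a system of linear congruences, observe that the trivial $\epsilon = 0$ is always a solution, and invoke Theorem~\ref{SRVW2THM}c) with $q=p$, $v_i=a_i$, $\deg P_i = 1$; the exponent $N - \sum_i (p^{a_i}-1) = 1$ forces $\zz_{\{0,1\}^N} \geq 2$, hence a nontrivial zero-sum subsequence. This is in the spirit of the remark the paper makes in \S 4.1 immediately after Theorem~\ref{NGTHM}, with one difference: the paper credits the observation to Schanuel and runs it through Theorem~\ref{SCHANTHM}b), which directly delivers a solution in $\{0,1\}^n$ outside $(p\Z)^n$, whereas you route through the stronger counting machinery of Theorem~\ref{SRVW2THM}c) and extract the nontrivial solution by noting $\zz \geq 2 > 1$. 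Your route works and is arguably more uniform with the rest of the paper's applications, but it is slightly heavier than necessary for an existence-only conclusion; the Schanuel/Chevalley-type statement already suffices. (Note also that the paper's \S 4.1 sketch writes ``$d(G) = \sum (p^{v_i}-1)$,'' which is inconsistent with its own definition $d(G) = 1 + \sum (n_i-1)$; your bookkeeping with $N = d(G) = 1 + \sum(p^{a_i}-1)$ is the correct one.)

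For part~(i), you are right that the polynomial method as developed in this paper does not apply when the moduli are not powers of a single prime, and that one must fall back on Olson's combinatorial iteration from \cite{Olson69b}. You acknowledge, accurately, that you are deferring to the reference here rather than supplying the argument; this matches what the paper itself does for both parts. So your proposal goes genuinely beyond the paper's proof for part~(ii) and matches the paper's deference for part~(i).
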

\begin{proof}
Part (i) is \cite[Cor. 1.1]{Olson69b}.  Part (ii) is \cite[(1)]{Olson69a}.
\end{proof}
\noindent
However, at almost the same time Olson's conjecture was disproved.

\begin{thm}(van Emde Boas-Krusywijk \cite{EBK69})
\label{EBK}
For $G = \Z/6\Z \times \Z/3\Z \times \Z/3\Z \times \Z/3\Z$, we have $d(G) < D(G)$.  
\end{thm}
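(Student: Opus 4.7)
The plan is to establish the two separate facts $d(G) = 12$ and $D(G) \geq 13$ and then simply compare them.

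For the first, I observe that the decomposition $G = \Z/6\Z \oplus \Z/3\Z \oplus \Z/3\Z \oplus \Z/3\Z$ is already in invariant-factor form, since $3 \mid 3 \mid 3 \mid 6$. Reading off the definition gives
\[ d(G) = 1 + (6-1) + (3-1) + (3-1) + (3-1) = 12. \]
So it suffices to exhibit a single sequence $x = (x_1,\ldots,x_{12}) \in G^{12}$ having no nonempty zero-sum subsequence; any such $x$ witnesses $D(G) \geq 13 > d(G)$.

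To build such an $x$, I would pass to the equivalent decomposition $G \cong \Z/2\Z \oplus (\Z/3\Z)^4$ (using $\Z/6\Z \cong \Z/2\Z \oplus \Z/3\Z$) and write each element as $(\epsilon; a)$ with $\epsilon \in \Z/2\Z$ and $a \in (\Z/3\Z)^4$. Following van Emde Boas and Kruyswijk, I would then select the 12 elements so that they enjoy two obstructions simultaneously: (a) the set $S \subseteq \{1,\ldots,12\}$ of indices $i$ with $\epsilon_i = 1$ is chosen so that for any candidate zero-sum $J$, the $\Z/2\Z$-projection forces $|J \cap S|$ to be even, which already eliminates most subsets $J$; and (b) for each of the (few) $J$ satisfying the parity condition, the $(\Z/3\Z)^4$-coordinates are arranged so that $\sum_{i \in J} a_i \neq 0$, which is verified by a short $\F_3$-linear algebra check made manageable by the $S_3$-symmetry among the three identical $\Z/3\Z$ summands.

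The principal obstacle is that this theorem is in essence a finite counterexample: no structural argument is known that disproves Olson's conjecture for this specific $G$, so the proof must exhibit a concrete zero-sum-free sequence of length $12$ and then carry out (or mechanize) the case analysis above. The polynomial methods of this paper produce \emph{upper} bounds on Davenport-type invariants and underlie the positive direction of Olson's Theorem \ref{OLSONTHM}; they cannot by themselves deliver the strict inequality $d(G) < D(G)$ recorded here.
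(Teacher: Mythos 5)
The paper offers no proof of this theorem: it is stated with a citation to van Emde Boas and Kruyswijk and nothing more, so there is no argument in the paper for yours to parallel or diverge from.

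Your preliminary steps are sound: $G = \Z/6\Z \oplus \Z/3\Z \oplus \Z/3\Z \oplus \Z/3\Z$ is already in invariant-factor form, so $d(G) = 1 + 5 + 2 + 2 + 2 = 12$, and it suffices to exhibit a single zero-sum-free sequence of length $12$. Your closing remark is also correct and worth making: the polynomial machinery of this paper produces \emph{upper} bounds on Davenport-type constants and therefore cannot, even in principle, establish the strict inequality $d(G) < D(G)$.

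But what you have written is a plan, not a proof. As you yourself say, the theorem is ``in essence a finite counterexample'': its entire mathematical content is the existence of a specific $12$-term sequence in $G$ with no nonempty zero-sum subsequence. Your steps (a) and (b) are phrased in the conditional (``I would pass to\ldots'', ``I would then select\ldots''); no twelve elements are ever named, so there is nothing for a reader to verify. The parity-plus-linear-algebra strategy is plausible in outline, but even granting the $\Z/2\Z$-parity reduction, a nontrivial set of candidate index sets $J$ survives and each must actually be checked against the $(\Z/3\Z)^4$-coordinates; without the explicit sequence that check cannot be performed. To turn this into a proof you must write down a concrete $x \in G^{12}$ (either the van Emde Boas--Kruyswijk configuration or one found by a finite search) and carry out the verification that no nonempty subsequence sums to zero. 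Until that is done, the key step of the argument is missing.
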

\noindent
In the intervening years there has been an explosion of work on the Davenport constant and related quantities.  Nevertheless, for most finite commutative groups $G$, the exact value of $D(G)$ remains unknown.
\\ \\
Let us turn to $g$-sum subsequences with $g \neq 0$.  There is no analogue 
of the Davenport constant here, because for for all $n \in \Z^+$, $(0,\ldots,0) \in G^n$ has length $n$ and no $g$-sum subsequence.  On the other hand, for $g \in G$ and $x \in G^n$, let 
\[N_g(x) = \# \left\{ J \subset \{1,\ldots,n\} \Big| \sum_{i \in J} x_i = g \right\}. \]

\begin{thm}
\label{NGTHM}
Let $(G,+)$ be a finite commutative group, let  $n \in \Z^+$, and let $g \in G$.
a) (\cite[Thm. 2]{Olson69b}) We have $\min_{x \in G^n} N_0(x) = \max \{1,2^{n+1-D(G)} \}$.  \\
b) (\cite[Thm. 2]{CCQWZ11}) For all $x \in G^n$, if $N_g(x) > 0$ then 
$N_g(x) \geq 2^{n+1-D(G)}$.
\end{thm}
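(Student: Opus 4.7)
The plan is to derive Theorem \ref{NGTHM}(b) from the Boolean case of Theorem \ref{SRVW2THM}c), in the central setting where $G$ is a $p$-group; part (a) then follows from (b) together with an easy tightness construction. Write $G \cong \bigoplus_{i=1}^r \Z/p^{a_i}\Z$ and decompose each $x_j = (x_j^{(1)}, \ldots, x_j^{(r)})$ and $g = (g^{(1)}, \ldots, g^{(r)})$ coordinate by coordinate, lifting each $x_j^{(i)}, g^{(i)}$ arbitrarily to $\Z$. A subset $J \subseteq \{1, \ldots, n\}$ is a $g$-sum subsequence iff its indicator $y \in \{0,1\}^n$ satisfies the $r$ linear congruences
\[ P_i(y) := \sum_{j=1}^n y_j x_j^{(i)} - g^{(i)} \equiv 0 \pmod{p^{a_i}}, \quad 1 \leq i \leq r. \]
Applying Theorem \ref{SRVW2THM}c) with $K = \Q$, $R = \Z$, $\pp = (p)$, $q = p$, and $v_i = a_i$ (so $\deg P_i = 1$), the hypothesis $N_g(x) > 0$ gives
\[ N_g(x) \geq 2^{n - \sum_{i=1}^r (p^{a_i} - 1)} = 2^{n + 1 - D(G)}, \]
where the rewriting of the exponent uses Olson's Theorem \ref{OLSONTHM}(ii), namely $D(G) = d(G) = 1 + \sum_i (p^{a_i} - 1)$ in the $p$-group case.

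For general finite commutative $G$, Theorem \ref{SRVW2THM} is single-prime in nature while $D(G)$ does not decompose across primary components (Theorem \ref{EBK}), so applying the $p$-group argument prime by prime does not recover $D(G)$ in the exponent. An additional step is needed. The natural options are a combinatorial descent on $n$ using the identity $N_g(x_1, \ldots, x_n) = N_g(x_1, \ldots, x_{n-1}) + N_{g - x_n}(x_1, \ldots, x_{n-1})$ with the $p$-group case serving as the base, or the direct combinatorial argument of CCQWZ. I expect this reduction to be the main technical obstacle, because simply adding the two contributions from the splitting identity only gives half the desired bound whenever one of $N_g(x')$, $N_{g-x_n}(x')$ vanishes.

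Part (a) follows from (b). The lower bound is (b) with $g = 0$, strengthened to $\max\{1, 2^{n+1-D(G)}\}$ by the trivial $N_0(x) \geq 1$ (empty subsequence). For the matching upper bound, let $(y_1, \ldots, y_{D(G)-1}) \in G^{D(G)-1}$ be a zero-sum-free sequence of maximum length, which exists by definition of $D(G)$. If $n \geq D(G) - 1$, append $n - D(G) + 1$ copies of $0$ to form $x \in G^n$; any zero-sum subsequence of $x$ must be disjoint from the zero-sum-free prefix, giving $N_0(x) = 2^{n + 1 - D(G)}$. If $n < D(G) - 1$, the same sequence truncated to length $n$ realizes $N_0(x) = 1$.
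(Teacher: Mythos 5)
Your first paragraph is correct and is, in fact, exactly what the paper does: the paper does not prove Theorem \ref{NGTHM}, it \emph{cites} both parts from Olson and CCQWZ, and the only derivation it offers is the $p$-group case, obtained from Theorem \ref{SRVW2THM}c) precisely as you describe (encode $g$-sum subsequences as Boolean solutions of $\sum_j y_j x_j^{(i)} \equiv g^{(i)} \pmod{p^{a_i}}$, apply the Boolean case, then use Olson's identity $D(G) = d(G) = 1 + \sum_i(p^{a_i}-1)$ to rewrite the exponent). So for $p$-groups you and the paper agree, down to the use of Theorem \ref{OLSONTHM}(ii).

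The gap you flag for general $G$ is genuine and your diagnosis of it is right. Theorem \ref{SRVW2THM} lives over a single prime, so applying it to each primary component of $G$ and combining the bounds can at best produce the exponent $n+1-d(G)$, and by Theorem \ref{EBK} this can be strictly weaker than $n+1-D(G)$. Your splitting identity $N_g(x) = N_g(x') + N_{g-x_n}(x')$ is correct, and your observation that a naive induction on it loses a factor of $2$ whenever exactly one summand vanishes is also correct --- there is no reason a priori that both $N_g(x')$ and $N_{g-x_n}(x')$ are positive when $N_g(x)$ is, even for $n \geq D(G)$, so this descent does not close. The CCQWZ result is established by a genuinely different combinatorial argument (the paper says as much: it ``was proved via combinatorial means''), and the paper's polynomial method is not claimed to reach the general case. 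Your proposal is an honest partial proof; the correct conclusion is that part (b) in full generality must be cited, as the paper does.

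Your derivation of (a) from (b) is correct. The lower bound is (b) with $g = 0$: since $N_0(x) \geq 1$ always (the empty subset), (b) applies to every $x$ and gives $N_0(x) \geq \max\{1, 2^{n+1-D(G)}\}$. For the matching upper bound, taking a zero-sum-free sequence of maximal length $D(G)-1$ and padding with zeros, any zero-sum subset must meet the zero-sum-free prefix in the empty set, so there are exactly $2^{n-(D(G)-1)} = 2^{n+1-D(G)}$ zero-sum subsets when $n \geq D(G)-1$, and exactly one (the empty set) when $n < D(G)-1$; in both cases equality in the claimed minimum is attained.
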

\noindent
Now let $G = \bigoplus_{i=1}^r \Z/p^{v_i} \Z$ be a $p$-group.
\\ \\
As Schanuel observed, in this case Theorem \ref{OLSONTHM} is 
a quick consequence of Theorem \ref{SCHANTHM}.  Indeed,
suppose $n > d(G) = \sum_{i=1}^r \left(p^{v_i} -1\right)$, and represent elements of $G$ by $r$-tuples of integers $(a_1,\ldots,a_r)$.  
For $1 \leq i \leq n$ and $1 \leq j \leq r$, let \[g_j = (a_1^{(j)},\ldots,a_r^{(j)})\] and 
\[P_i(t_1,\ldots,t_n) = \sum_{j=1}^n a_i^{(j)} t_j.\]   Theorem \ref{SCHANTHM}b) applies to give 
$x \in \{0,1\}^n \setminus \{(0,\ldots,0)\}$ such that \[\sum_{j=1}^n a_i^{(j)} x_j \equiv 0 \pmod{p^{v_i}} \ \forall 1 \leq i \leq r. \] 
Then we get a zero-sum subsequence from $J = \{j \mid x_j = 1\}$.  
\\ \\
Moreover, in this case the Restricted Variable Warning's Second Theorem implies a combination of Theorem \ref{OLSONTHM} and 
Theorem \ref{NGTHM}: namely Theorem \ref{NGTHM} with $D(G)$ replaced by the 
explicit value $d(G) = \sum_{i=1}^r \left(p^{v_i} -1\right)$.   By part a), 
Theorem \ref{SRVW2THM} and Lemma \ref{FSLEMMA}, we get $N_g(x) = 0$ or 
\[ N_g(x) \geq \mm\left(2,\ldots,2;n+(n-\sum_{i=1}^r (p^{v_i}-1) \right)
= 2^{n-\sum_{i=1}^r \left(p^{v_i}-1 \right)}. \]


\subsection{Generalized Subsequences}
\textbf{} \\ \\ \noindent
The following results are the analogues of those of the previous section 
for generalized $g$-sum subsequences.  The proofs are the same.

\begin{thm}(Brink)
Let $G \cong \bigoplus_{j=1}^r \Z/p^{v_i} \Z$ be a finite commutative 
$p$-group.  Let $A_1,\ldots,A_n$ be nonempty subsets of $\Z$ such that 
each $A_i$ has pairwise incongruent elements modulo $p$. Put $A = \prod_{i=1}^n A_i$.  Assume that 
\[ \sum_{i=1}^n \left( \# A_i - 1 \right) > \sum_{j=1}^r \left( p^{v_j} - 1 \right). \]
Let $x = (x_1,\ldots,x_n) \in G^n$ be a sequence of elements in $G$. \\
a) Then $\# \{ (a_1,\ldots,a_n) \in A \mid a_1 x_1 + \ldots + a_n x_n = 0 \} \neq 1$.  \\
b) If $0 \in A$, then there is $0 \neq a = (a_1,\ldots,a_n) \in A$ such that $a_1 x_1 + \ldots + a_n x_n = 0$.  
\end{thm}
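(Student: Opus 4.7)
The plan is to reduce the claim to Brink's Theorem (Theorem \ref{NFBRINKTHM}) applied over $K=\Q$ with $\pp = (p)$, exactly mirroring the Schanuel-style reduction used earlier in the paper to handle the Davenport constant for $p$-groups. Write each sequence element in coordinates, $x_i = (x_i^{(1)},\ldots,x_i^{(r)}) \in \bigoplus_{j=1}^r \Z/p^{v_j}\Z$, and form the $r$ linear polynomials
\[ P_j(t_1,\ldots,t_n) = \sum_{i=1}^n x_i^{(j)} t_i \in \Z[t_1,\ldots,t_n], \qquad 1 \leq j \leq r. \]
For $a = (a_1,\ldots,a_n) \in A$, the equation $a_1 x_1 + \ldots + a_n x_n = 0$ in $G$ is equivalent to the congruence system $P_j(a) \equiv 0 \pmod{p^{v_j}}$ for $1 \leq j \leq r$, so the set $Z_A$ of Theorem \ref{NFBRINKTHM} coincides with the set counted in the statement.

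Since each $P_j$ is linear, $\deg P_j \leq 1$, so the hypothesis $\sum_{j=1}^r (p^{v_j}-1) < \sum_{i=1}^n (\#A_i-1)$ upgrades to the degree-weighted inequality
\[ \sum_{j=1}^r (p^{v_j}-1) \deg P_j \;\leq\; \sum_{j=1}^r (p^{v_j}-1) \;<\; \sum_{i=1}^n (\#A_i - 1) \]
required by Theorem \ref{NFBRINKTHM}a). That theorem therefore delivers $\zz_A \neq 1$, which is part a) of the present statement.

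For part b), note that when $0 \in A$ the all-zero tuple $(0,\ldots,0) \in A$ lies in $Z_A$, since each $P_j$ is linear with no constant term. Hence $\zz_A \geq 1$, and combining this with $\zz_A \neq 1$ from part a) forces $\zz_A \geq 2$. Any element of $Z_A$ other than the all-zero tuple then furnishes the desired $0 \neq a \in A$ with $a_1 x_1 + \ldots + a_n x_n = 0$.

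There is no substantive obstacle beyond the translation: the argument is a dictionary between length-$n$ sequences in a finite commutative $p$-group and systems of $r$ linear congruences, after which Theorem \ref{NFBRINKTHM} does all the work. The only point to keep an eye on is the use of $\deg P_j \leq 1$ to pass from the raw hypothesis to Brink's degree-weighted form; beyond this the deduction is essentially immediate.
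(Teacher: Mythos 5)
Your proof is correct and follows exactly the reduction the paper has in mind: the section opener states ``the proofs are the same'' as in \S 4.1, i.e., encode the sequence as the $r$ linear forms $P_j = \sum_i x_i^{(j)} t_i$ and apply Brink's Theorem (Theorem \ref{BIGBRINKTHM}, equivalently Theorem \ref{NFBRINKTHM} with $K = \Q$, $\pp = (p)$) in place of Schanuel's Theorem. Your handling of the degree bound $\deg P_j \leq 1$ and the deduction of part b) from part a) via $0 \in Z_A$ are exactly the intended steps.
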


\begin{thm}
\label{OLSONESQUE}
Let $p$ be a prime, let $r,v_1,\ldots,v_r \in \Z^+$; put $G = \bigoplus_{i=1}^r \Z/p^{v_i} \Z$.  For $n \in \Z^+$, let $x = (x_1,\ldots,x_n) \in G^n$ be a sequence of elements in $G$.  Let $A_1,\ldots,A_n$ be nonempty 
subsets of $\Z$ such that for each $i$ the elements of $A_i$ are pairwise 
incongruent modulo $p$, and put $A= \prod_{i=1}^n A_i$.  For $g =(g_1,\ldots,g_r) \in G$, let 
\[N_{g,A}(x) = \# \{a = (a_1,\ldots,a_n) \in A \mid a_1 x_1 + \ldots + a_n x_n = g \}. \]
Then $N_{g,A}(x) = 0$ or 
\[ N_{g,A}(x) \geq \mm \left( \# A_1,\ldots,\# A_n; \# A_1 + \ldots + \# A_n - 
\sum_{i=1}^r (p^{v_i}-1) \right). \]
\end{thm}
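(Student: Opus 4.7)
The plan is to deduce this as an essentially immediate consequence of the Restricted Variable Warning's Second Theorem (Theorem \ref{SRVW2THM}) applied to the number field $K = \Q$ with $R = \Z$ and prime ideal $\pp = (p)$, so that $q = p$. All the work is in setting up the right polynomial system so that the combinatorial quantity $N_{g,A}(x)$ matches $\zz_A$ and the bound from Theorem \ref{SRVW2THM} simplifies to the claimed expression.

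First I would choose integer representatives: write each element $x_j \in G$ as a tuple $x_j = (a_1^{(j)},\ldots,a_r^{(j)})$ with $a_i^{(j)} \in \Z$, and similarly write $g = (g_1,\ldots,g_r)$. For each $1 \leq i \leq r$, define the linear polynomial
\[ P_i(t_1,\ldots,t_n) = \Bigl(\sum_{j=1}^n a_i^{(j)} t_j\Bigr) - g_i \in \Z[t_1,\ldots,t_n], \]
so that $\deg P_i = 1$. By construction, for $a = (a_1,\ldots,a_n) \in A$ the equality $a_1 x_1 + \ldots + a_n x_n = g$ in $G = \bigoplus_{i=1}^r \Z/p^{v_i}\Z$ is equivalent to the simultaneous congruences $P_i(a) \equiv 0 \pmod{p^{v_i}}$ for $1 \leq i \leq r$. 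Thus if we let $Z_A$ denote the common solution set in $A$ of these congruences, then $\zz_A = \# Z_A = N_{g,A}(x)$.

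Next I would invoke Theorem \ref{SRVW2THM}a): the hypotheses are satisfied because $R = \Z$, the $A_i$ have pairwise incongruent elements modulo $\pp = (p)$ by assumption, and each $P_i$ lies in $R[t_1,\ldots,t_n]$. The theorem gives
\[ \zz_A = 0 \quad \text{or} \quad \zz_A \geq \mm\Bigl(\# A_1,\ldots,\# A_n;\; \sum_{i=1}^n \# A_i - \sum_{i=1}^r (p^{v_i}-1)\deg(P_i)\Bigr). \]
Substituting $\deg P_i = 1$ and $\zz_A = N_{g,A}(x)$ yields exactly the asserted dichotomy.

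There is no real obstacle here; the only thing to sanity-check is that inhomogeneous constants ($-g_i$) do no harm, which is clear because Theorem \ref{SRVW2THM} is stated for arbitrary polynomials $P_j \in R[t_1,\ldots,t_n]$, not just homogeneous ones. If one wanted to emphasize the Olson-type corollary ($g = 0$, $A_i = \{0,1\}$), one would additionally apply Lemma \ref{FSLEMMA}c) to simplify the $\mm$-value to $2^{n - \sum_{i=1}^r (p^{v_i}-1)}$, exactly as done for the Boolean case in the proof of Theorem \ref{SRVW2THM}c).
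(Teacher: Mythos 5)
Your proposal is correct and is exactly the argument the paper intends: the paper remarks that "the proofs are the same" as in the preceding section, where the Restricted Variable Warning's Second Theorem is applied to the linear polynomials $\sum_j a_i^{(j)} t_j - g_i$. The only thing worth noting is that in degenerate cases $\deg P_i$ could be less than $1$, but since $\mm(\,\cdot\,;N)$ is nondecreasing in $N$ this only strengthens the bound, so the conclusion stands.
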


\subsection{Counting Sub-(Set Systems) With Union Cardinality $0$ Modulo $q$}
\textbf{} \\ \\ \noindent
\newcommand{\FF}{\mathcal{F}}
In \cite{AKLMRS91}, AKLMRS applied 
Schanuel's Theorem to deduce a result on set systems.  This is an interesting case for these methods because (i) unlike the applications of the 
previous section the polynomials are not linear (or even obtained from 
linear polynomials by applying the Schanuel-Brink operator); (ii) there is no known purely combinatorial proof; and (iii) the bound obtained is sharp in all cases.  By applying Theorem \ref{SRVW2THM} instead of Schanuel's Theorem, we immediately derive a quantitative 
refinement of this result and also treat the ``inhomogeneous case.''
\\ \\
A \textbf{set system} is a finite sequence $\FF = (\FF_1,\ldots,\FF_n)$ 
of finite subsets of some fixed set $X$.  We say that $n$ is the \textbf{length} of $\FF$.  The \textbf{maximal degree} of $\FF$ is $\max_{x \in X} \# \{ 1 \leq i \leq n \mid x \in \FF_i\}$.  For $m$ a positive integer and $g \in \Z/m\Z$, let 
\[ N_{\FF}(m,g) = \# \{ J \subset \{1,\ldots,n\} \mid \# (\bigcup_{i \in J} \FF_i) \equiv g \pmod{m} \}, \]
and for $n,d \in \Z^+$, let
\[ \mathcal{N}_{n,d}(m) = \min N_{\FF}(m,0), \]
the minimum ranging over set systems of length $n$ and maximal degree at most $d$.  Let $f_d(m)$ be the least $n \in \Z^+$ such that for any degree $d$ set system $\FF$ of length $n$, there is a nonempty subset $J \subset \{1,\ldots,n\}$ such that $m \mid \# (\bigcup_{i \in J} \FF_i)$.   Thus 
\begin{equation}
\label{AKLMRSEQ1} 
 f_d(m) = \min \{n \in \Z^+ \mid \mathcal{N}_{n,d}(m) \geq 2 \}.
\end{equation}

\begin{lemma}(AKLMRS)
\label{AKLMRSLEMMA}
We have $f_d(m) \geq d(m-1) + 1$.
\end{lemma}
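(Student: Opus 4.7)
The plan is to establish the lower bound on $f_d(m)$ by producing an explicit set system of length $d(m-1)$ and maximal degree at most $d$ admitting no nonempty subfamily whose union has cardinality divisible by $m$. By definition of $f_d(m)$ and equation (\ref{AKLMRSEQ1}), exhibiting such a witness forces $f_d(m) > d(m-1)$, which is what we want.

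Concretely, I would take the ground set $X = \{1,2,\ldots,m-1\}$ and form the set system $\FF = (\FF_{1,1},\ldots,\FF_{d,m-1})$ indexed by pairs $(j,i) \in \{1,\ldots,d\} \times \{1,\ldots,m-1\}$ by setting
\[ \FF_{j,i} = \{i\}. \]
Thus $\FF$ has length $d(m-1)$. The maximal degree is easy: each element $i \in X$ belongs to exactly $d$ sets of $\FF$ (namely $\FF_{1,i}, \ldots, \FF_{d,i}$), so the maximal degree equals $d$.

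Next I would verify the union-cardinality condition. For any nonempty $J \subseteq \{1,\ldots,d\}\times\{1,\ldots,m-1\}$, the union $\bigcup_{(j,i)\in J} \FF_{j,i}$ equals the projection of $J$ onto the second coordinate, which is a nonempty subset of $\{1,\ldots,m-1\}$. Hence its cardinality lies in $\{1,2,\ldots,m-1\}$ and is therefore not divisible by $m$. This rules out every nonempty $J$, so $\mathcal{N}_{d(m-1),d}(m) = 1 < 2$, and the displayed characterization (\ref{AKLMRSEQ1}) of $f_d(m)$ gives $f_d(m) \geq d(m-1)+1$.

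There is no real obstacle here: the statement is a lower bound that is established by an explicit extremal construction, and the only thing to check is the trivial observation that a union of singletons drawn from an $(m-1)$-element universe has size at most $m-1$. The construction is in fact the natural one to have in mind when reading the subsequent sharpness claims about $\mathcal{N}_{n,d}(m)$.
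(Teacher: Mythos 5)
Your argument is correct, and it is in fact \emph{simpler} than the proof in the paper. The paper exhibits a system $\FF = \{ A_{ij} \cup \{v_i\} : 1 \leq i \leq m-1,\ 1 \leq j \leq d\}$, where the $A_{ij}$ are pairwise disjoint of cardinality $m$ and $\{v_1,\ldots,v_{m-1}\}$ is a further disjoint set. There each member of $\FF$ has size $m+1$, and the union over any nonempty $J$ has cardinality $(\# J)\cdot m + (\text{number of distinct first indices appearing in } J)$, which is $\not\equiv 0 \pmod{m}$; the $v_i$'s carry the degree-$d$ overlap. You dispense with the ``ballast'' sets $A_{ij}$ entirely and take the $d(m-1)$-term sequence of singletons $\FF_{j,i} = \{i\}$ on the ground set $\{1,\ldots,m-1\}$; the union over any nonempty index set is then a nonempty subset of $\{1,\ldots,m-1\}$, of size between $1$ and $m-1$, hence not a multiple of $m$, and each $i$ has degree exactly $d$. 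Since the paper's notion of set system is a \emph{sequence} of finite sets (repetitions allowed) with no nonemptiness or distinctness constraint, your construction is legitimate, and the monotonicity of the defining property of $f_d$ (drop a set from a longer system) makes the passage from ``$P(d(m-1))$ fails'' to ``$f_d(m) \geq d(m-1)+1$'' correct. Your version trades the paper's uniform-size sets for a leaner witness; what the paper's version buys is a system in which all members are distinct and of equal cardinality, which is cosmetically closer to the original AKLMRS setup but not needed for the lemma as stated.
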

\begin{proof}
Let $A_{ij}$ be a family of pairwise disjoint sets each of cardinality $m$, as $1 \leq i \leq m-1, \ 1 \leq j \leq d$.  Let $\{v_1,\ldots,v_{m-1} \}$ be a set of cardinality $m$, disjoint from all the $A_{ij}$'s.  Then $\FF = \{ A_{ij} \cup \{v_i\} \mid 1 \leq i \leq m-1, \ 1 \leq j \leq d \}$ has length $n$ and 
for no nonempty subset $J \subset \{1,\ldots,n\}$ do we have $m \mid \#  (\bigcup_{i \in J} \FF_i)$.
\end{proof}

\begin{thm}
Let $q = p^v$ be a prime power, $g \in \Z/p^v \Z$, $d,n \in \Z^+$, and $\FF = (\FF_1,\ldots,\FF_n)$ 
a set system of maximal degree $n$.  Then: \\
a) $\mathcal{N}_{\FF}(p^v,g)$ is either $0$ or at least $2^{n-d(p^v-1)}$.  We deduce: \\
b) $\mathcal{N}_{n,d}(p^v) \geq 2^{n-d(p^v-1)}$; and thus \\
c) (AKLMRS) $f_d(q) = d(p^v-1) + 1$.
\end{thm}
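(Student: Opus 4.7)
The plan is to obtain part (a) by encoding the union-cardinality-mod-$p^v$ condition as a single polynomial congruence over $\Z$, then invoking the Boolean case c) of Theorem \ref{SRVW2THM} directly; parts (b) and (c) will fall out as corollaries, with Lemma \ref{AKLMRSLEMMA} supplying the matching lower bound for (c).

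To build the polynomial, I would identify subsets $J \subseteq \{1,\ldots,n\}$ with their indicator vectors $y = \mathbf{1}_J \in \{0,1\}^n$ and, for each element $x$ in the ground set $X = \bigcup_{i=1}^n \FF_i$, use the multiplicative form of inclusion-exclusion
\[ \mathbf{1}\Big[x \in \bigcup_{i \in J} \FF_i\Big] \;=\; 1 - \prod_{i \in I_x}(1 - y_i), \qquad I_x = \{i : x \in \FF_i\}, \]
to rewrite $\#(\bigcup_{i \in J} \FF_i) = \#X - \sum_{x \in X} \prod_{i \in I_x}(1 - y_i)$. Subtracting a chosen integer lift of $g$ then yields a polynomial $P \in \Z[y_1,\ldots,y_n]$ with $\deg P \leq \max_{x \in X} |I_x| \leq d$ and with the crucial property that $P(\mathbf{1}_J) \equiv 0 \pmod{p^v}$ iff $\#(\bigcup_{i \in J} \FF_i) \equiv g \pmod{p^v}$. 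Applying Theorem \ref{SRVW2THM}c) in the case $K = \Q$, $R = \Z$, $\pp = (p)$, $q = p$, $r = 1$, $v_1 = v$ to this $P$ immediately gives $N_\FF(p^v,g) = 0$ or $N_\FF(p^v,g) \geq 2^{n - d(p^v - 1)}$, proving (a).

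Part (b) follows by specializing to $g = 0$: the empty subset $J = \varnothing$ has union cardinality $0$, so $N_\FF(p^v,0) \geq 1$, ruling out the ``$0$ alternative'' of (a) and leaving $N_\FF(p^v,0) \geq 2^{n - d(p^v - 1)}$; taking the minimum over all legal $\FF$ gives (b). For (c), the bound in (b) yields $\mathcal{N}_{n,d}(p^v) \geq 2$ as soon as $n \geq d(p^v - 1) + 1$, which by (\ref{AKLMRSEQ1}) means $f_d(p^v) \leq d(p^v - 1) + 1$; combined with Lemma \ref{AKLMRSLEMMA} this is the desired equality.

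I do not anticipate a serious obstacle: the only non-mechanical step is recognizing that multiplicative inclusion-exclusion produces a polynomial whose degree is controlled by the \emph{maximal} degree $d$ of the set system, rather than by $n$ or $\#X$, which is precisely what makes the application of Theorem \ref{SRVW2THM}c) effective and accounts for the sharpness asserted in (c).
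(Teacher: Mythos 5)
Your proposal is correct and is essentially the paper's own argument: expanding $\sum_{x\in X}\left(1-\prod_{i\in I_x}(1-y_i)\right)$ gives exactly the inclusion--exclusion polynomial $h(t)=\sum_{\varnothing\neq J}(-1)^{\#J+1}\,\#\!\left(\bigcap_{j\in J}\FF_j\right)\prod_{j\in J}t_j$ that the paper uses, and your degree bound via $\max_x\#I_x\leq d$ is the same fact as the paper's observation that $>d$-fold intersections vanish under the maximal-degree hypothesis. The application of Theorem \ref{SRVW2THM}c) with $q=p$, $r=1$, $v_1=v$, and the deductions of parts b) and c) via $J=\varnothing$, (\ref{AKLMRSEQ1}), and Lemma \ref{AKLMRSLEMMA}, all coincide with the paper's proof.
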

\begin{proof}
a) For $\FF$ a set system of length $n$ and maximal degree at most $d$, put
\[ h(t_1,\ldots,t_n) = \sum_{\varnothing \neq J \subset \{1,\ldots,n\}} 
(-1)^{\# J + 1} \# (\bigcap_{j \in J} \FF_i) \prod_{j \in J} t_j. \]
Then $\deg h \leq d$ and $h(0) = 0$.  For any $x \in \{0,1\}^n$, let 
$J_x = \{ 1 \leq j \leq n \mid x_j = 1\}$.  The Inclusion-Exclusion Principle implies 
\[ h(x) = \# \bigcup_{j \in J_x} \FF_j, \]
so $\mathcal{N}_{\FF}(p^v,g)$ counts the number of solutions $x \in \{0,1\}^n$ to the congruence $h(t)-g \equiv 0 \pmod{p^v}$.   Applying Theorem \ref{SRVW2THM} 
establishes part a). \\
b) Taking $J = \varnothing$ shows $\mathcal{N}_{\FF}(p^v,0) \geq 1$.  Apply part a). \\
c) By part a) and (\ref{AKLMRSEQ1}), we see that $f_d(q) \leq d(q-1) + 1$.  Apply Lemma \ref{AKLMRSLEMMA}.
\end{proof}

\subsection{An EGZ-Type Theorem}
\textbf{} \\ \\ \noindent
As we saw in $\S$ 4.1, computing the Davenport constant of a finite cyclic group is an easy exercise.  A more interesting variant is to ask how large $n$ needs to be in order to ensure that any sequence of length $n$ in the group $\Z/m\Z$ has a zero-sum subsequence of length $m$.  The sequence 
\[ (\overbrace{0,\ldots,0}^{m-1},\overbrace{1,\ldots,1}^{m-1}) \]
shows that we need to take $n \geq 2m-1$.  The following converse is one of the founding results in this branch of 
additive combinatorics.

\begin{thm}(Erd\H os-Ginzburg-Ziv \cite{EGZ61})
\label{EGZ}
Every sequence of length $2m-1$ in $\Z/m\Z$ has a zero-sum subsequence of length $m$.
\end{thm}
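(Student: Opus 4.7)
The plan is to apply the Boolean case of Theorem~\ref{SRVW2THM} to a natural linear system, after reducing to the prime-power case. By the classical multiplicativity of the EGZ property---if $m_1$ and $m_2$ both satisfy the conclusion, then so does $m_1 m_2$, proved by iteratively extracting $2m_2 - 1$ disjoint zero-sum subsequences of length $m_1$ from the reduction mod $m_1$ and then applying EGZ for $m_2$ to the sequence of their $m_1$-averages in $\Z/m_2\Z$---it suffices to handle $m = p^v$ a prime power.

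So fix a prime power $m = p^v$, a sequence $a_1, \ldots, a_{2m-1} \in \Z$, and set $n = 2m - 1$. I would consider the two linear polynomials
\[ P_1(t_1,\ldots,t_n) = \sum_{i=1}^n a_i t_i, \qquad P_2(t_1,\ldots,t_n) = \sum_{i=1}^n t_i \]
in $\Z[t_1,\ldots,t_n]$ and count $x \in \{0,1\}^n$ satisfying $P_1(x) \equiv P_2(x) \equiv 0 \pmod{p^v}$. Taking $K = \Q$, $\pp = (p)$ (so $q = p$), and $v_1 = v_2 = v$, the exponent appearing in Theorem~\ref{SRVW2THM}(c) is
\[ n - \sum_{j=1}^2 (p^v - 1)\deg P_j \;=\; (2p^v - 1) - 2(p^v - 1) \;=\; 1, \]
so the Boolean case delivers at least $2^1 = 2$ solutions in $\{0,1\}^n$.

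Since $(0,\ldots,0)$ is always a solution, a second solution $x$ must be nontrivial; letting $J = \{i : x_i = 1\}$, the $P_2$-congruence forces $|J|$ to be a positive multiple of $p^v$ that is at most $2p^v - 1$, hence exactly $p^v = m$. The $P_1$-congruence then gives $\sum_{i \in J} a_i \equiv 0 \pmod m$, exhibiting a zero-sum subsequence of the required length $m$. The polynomial-method input is a single clean application of Theorem~\ref{SRVW2THM}(c), and it is noteworthy that the degree budget has exactly one unit of slack---which is precisely what guarantees a second solution beyond the trivial one. The only step requiring combinatorial care is the multiplicativity reduction, but this is classical and is the main (minor) obstacle; the analytic heart of the theorem is packaged entirely into the Boolean restricted-variable estimate.
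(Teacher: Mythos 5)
Your proof is correct, but it routes through the paper's main machinery rather than the classical argument the paper gestures at. The paper itself does not prove Theorem~\ref{EGZ}: it cites \cite{EGZ61} for the original combinatorial proof, notes the standard multiplicativity reduction to the case of \emph{prime} $m$, and points to \cite{Bailey-Richter89} for the now-standard application of Chevalley's Theorem over $\F_p$ to the degree-$(p-1)$ polynomials $\sum_i a_i t_i^{p-1}$ and $\sum_i t_i^{p-1}$ in $2p-1$ variables. You instead stop the reduction at prime powers $m = p^v$ and apply Theorem~\ref{SRVW2THM}(c) to the two \emph{linear} congruences $\sum a_i t_i \equiv 0$ and $\sum t_i \equiv 0 \pmod{p^v}$ over $\{0,1\}^n$; the exponent count $n - 2(p^v-1) = 1$ gives at least two solutions, one of which is nontrivial, and the support of that solution has size a positive multiple of $p^v$ not exceeding $2p^v - 1$, hence exactly $p^v$. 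Both routes invoke the same multiplicativity fact, but yours needs it only coarsely (to peel off prime-power factors rather than primes one at a time). What your argument buys is a proof entirely within the paper's framework, handling prime powers directly and as a special case of the paper's own Theorem~\ref{EGZTHM} (take $G = \Z/p^v\Z$, $A_i = \{0,1\}$, $k = v$); the cost is that Theorem~\ref{SRVW2THM}(c) internally rests on the Schanuel--Brink operator and Alon--F\"uredi, heavier machinery than the reduced-polynomial degree argument underlying Chevalley for the prime case. Your ``one unit of slack'' observation is the same tightness remark one makes in the classical proof.
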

\noindent
It is not hard to see that if Theorem \ref{EGZ} holds for positive integers $m_1$ and $m_2$ then it holds for their product, and thus one reduces to the case in which $m$ is prime.  The original work \cite{EGZ61} showed this via a combinatorial argument.  Later it was realized that one can get a quick proof using Chevalley's Theorem \cite{Bailey-Richter89}.
\\ \\
A recent paper of DAGS \cite{DAGS12} treats the analogous problem in any finite commutative $p$-group, with zero-sum subsequences replaced by generalized zero-sum subsequences in the sense of $\S$ 4.2.  As before, using Theorem \ref{SRVW2THM} we get a quantitative refinement which also includes the inhomogeneous case.
\\ \\
For a finite commutative group $G$, let $\exp G$ denote the exponent of $G$, i.e., the least common multiple of all orders of elements in $G$.

\begin{lemma}
Let $\{0\} \subset A \subset \Z$ be a finite subset, no two of whose elements are congruent modulo $p$.  There is $C_A \in \Z_{(p)}[t]$ 
of degree $\# A-1$ such that for $a \in A$, 
\[ C_A(a) =  \begin{cases} 
      0 & a = 0 \\
      1 & a \neq 0
    
   \end{cases}. \]
\end{lemma}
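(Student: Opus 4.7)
The plan is to construct $C_A$ explicitly by a Lagrange-style interpolation formula that is tailored to exploit the incongruence hypothesis. Specifically, I will take
\[ C_A(t) = 1 - \prod_{a \in A \setminus \{0\}} \frac{a - t}{a}. \]

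First I would check that this lies in $\Z_{(p)}[t]$. The denominators are the elements $a \in A \setminus \{0\}$, and the key point is that the hypothesis on $A$ forces each such $a$ to be a unit of $\Z_{(p)}$: since $0 \in A$ and distinct elements of $A$ are incongruent mod $p$, any nonzero $a \in A$ satisfies $a \not\equiv 0 \pmod p$, hence $a \in \Z_{(p)}^\times$. So all the factors $(a-t)/a$ lie in $\Z_{(p)}[t]$, and therefore so does $C_A$.

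Next I would read off the degree. The product has $\#A - 1$ linear factors in $t$, each with nonzero leading coefficient in $\Z_{(p)}$, so the product has degree exactly $\#A - 1$; subtracting $1$ leaves the top coefficient untouched (assuming $\#A \geq 2$; the edge case $A = \{0\}$ gives the empty product $1$, so $C_A = 0$, which has the correct values vacuously). Finally, I would verify the two required values: at $t=0$ every factor $(a-0)/a$ equals $1$, so $C_A(0) = 1-1 = 0$, while at $t = c$ with $c \in A \setminus \{0\}$ the factor indexed by $a=c$ vanishes, collapsing the product to $0$ and giving $C_A(c) = 1$.

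There is no real obstacle here: the only non-trivial ingredient is the observation that the pairwise-incongruence hypothesis, together with $0 \in A$, makes every denominator a $p$-adic unit, so no primes enter the denominator and the Lagrange formula lands in $\Z_{(p)}[t]$ rather than just $\Q[t]$. The rest is bookkeeping on degree and values.
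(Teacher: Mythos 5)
Your construction $C_A(t) = 1 - \prod_{a \in A \setminus \{0\}} \frac{a-t}{a}$ is exactly the one the paper uses, and your verification (denominators are $p$-adic units since $0 \in A$ forces nonzero $a \in A$ to be coprime to $p$; degree and value checks are immediate) is correct. No substantive difference from the paper's proof.
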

\begin{proof} 
We may take $C_A(t) = 1 - \prod_{a \in A \setminus \{0\}} \frac{a-t}{a}$.  
\end{proof}

\begin{thm}
\label{EGZTHM}
Let $k,r$, $v_1 \leq \ldots \leq v_r$ be positive integers, and let 
$G = \bigoplus_{i=1}^r \Z/p^{v_i} \Z$.  Let $A_1,\ldots,A_n$ be nonempty 
subsets of $\Z$, each containing $0$, such that for each $i$ the elements of $A_i$ are pairwise incongruent modulo $p$.  Put 
\[ A = \prod_{i=1}^n A_i, \ a_M = \max \# A_i. \] 
For $x \in G$, let  $\operatorname{EGZ}_{A,k}(x)$ 
be the number of $(a_1,\ldots,a_n) \in A$ such that $a_1 x_1 + \ldots + a_n x_n = x$ and $p^k \mid \# \{ 1 \leq i \leq n \mid a_i \neq 0\}$.  Then either 
$\operatorname{EGZ}_{A,k}(x) = 0$ or
\begin{equation}
\label{BIGEGZEQ}
\operatorname{EGZ}_{A,k}(x) \geq \mm(\# A_1,\ldots,\# A_n; \# A_1 + \ldots + \# A_n - \sum_{i=1}^r (p^{v_i}-1) - (a_M-1)(p^k-1)). 
\end{equation}
\end{thm}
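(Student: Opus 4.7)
The plan is to translate the two defining conditions of $\operatorname{EGZ}_{A,k}(x)$ into a polynomial system over $\Z_{(p)}$ and then invoke the Restricted Variable Warning's Second Theorem (Theorem \ref{SRVW2THM}) with $K = \Q$, $\pp = (p)$, and $q = p$. Finding the right encoding -- in particular for the divisibility condition on the support size -- is really the only step that requires any idea; after that everything is a matter of reading off degrees.

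First I would encode the $g$-sum condition. Writing each $y_i \in G$ in coordinates as $y_i = (y_{i,1}, \ldots, y_{i,r})$ and $x = (x_1, \ldots, x_r)$ similarly, the equation $a_1 y_1 + \ldots + a_n y_n = x$ in $G$ amounts to the $r$ linear congruences
\[
P_j(t_1,\ldots,t_n) := \sum_{i=1}^n y_{i,j} t_i - x_j \equiv 0 \pmod{p^{v_j}}, \quad 1 \leq j \leq r,
\]
each of degree $1$. For the support-divisibility condition $p^k \mid \#\{i : a_i \neq 0\}$, I would invoke the preceding lemma: for each $1 \leq i \leq n$ it produces $C_{A_i} \in \Z_{(p)}[t]$ of degree $\#A_i - 1$ with $C_{A_i}(0) = 0$ and $C_{A_i}(a) = 1$ for every nonzero $a \in A_i$. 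Then $\sum_i C_{A_i}(a_i)$ counts the nonzero entries of $(a_1,\ldots,a_n) \in A$, and the constraint becomes the single congruence
\[
P_{r+1}(t) := \sum_{i=1}^n C_{A_i}(t_i) \equiv 0 \pmod{p^k},
\]
of degree $\max_i(\#A_i - 1) = a_M - 1$.

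The $Z_A$ attached to the $r+1$ congruences $P_1, \ldots, P_{r+1}$ is exactly the set counted by $\operatorname{EGZ}_{A,k}(x)$. Applying Theorem \ref{SRVW2THM}a), the quantity $\sum_{j} (p^{v_j} - 1)\deg(P_j)$ becomes $\sum_{j=1}^r (p^{v_j} - 1) + (p^k - 1)(a_M - 1)$, so the lower bound produced (when $\operatorname{EGZ}_{A,k}(x) > 0$) is exactly (\ref{BIGEGZEQ}). The one mild technicality is that Theorem \ref{SRVW2THM} is stated for polynomials in $R[t]$ while $P_{r+1}$ lies in $\Z_{(p)}[t]$; this is harmless, either because the proof via the Schanuel--Brink operator (Lemma \ref{DELTALEMMA}) goes through verbatim over $R_\pp[t]$, or because one may clear denominators by multiplying $P_{r+1}$ by a common denominator of the coefficients of the $C_{A_i}$ -- such a denominator is automatically coprime to $p$, so multiplying by it does not affect the congruence modulo $p^k$. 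I expect no further obstacle.
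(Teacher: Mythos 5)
Your proposal is correct and is essentially the paper's own argument: encode the $r$ group-coordinate equations as linear congruences modulo $p^{v_j}$, enforce the support-divisibility condition by the single congruence $\sum_i C_{A_i}(t_i) \equiv 0 \pmod{p^k}$ of degree $a_M - 1$, and read off the bound from Theorem \ref{SRVW2THM}a). Your closing remark about $C_{A_i}$ living in $\Z_{(p)}[t]$ rather than $\Z[t]$ is a legitimate (and easily repaired) technicality that the paper glosses over, and your two suggested fixes are both sound.
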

\begin{proof}
We apply Theorem \ref{SRVW2THM} as in the proof of Theorem \ref{OLSONESQUE}.  The extra condition that the number of nonzero terms in the zero-sum generalized subsequence is a multiple of $p^k$ is enforced via the polynomial 
congruence 
\[ C_{A_1}(t_1) + \ldots + C_{A_n}(t_n) \equiv 0 \pmod{p^k}, \]
which has degree $a_M-1$.  
\end{proof}

\begin{cor}
\label{DAGS}
In Theorem \ref{EGZTHM}, let $0 \in A_1 = \ldots = A_n$, 
$k = v_r$.  Put $a = \# A_1$. \\
a) Suppose \[n \geq \exp G - 1 + \frac{D(G)}{a-1}. \] Let $R$ be such that $R \equiv -\sum_{i=1}^r (p^{v_i}-1) \pmod{a-1}$ and $0 \leq R < a-1$.  Then 
\begin{equation}
\label{DAGSINEQ}
\operatorname{EGZ}_{A,v_r}(0) \geq (R+1)a^{n+1-\exp G + \lfloor \frac{1-D(G)}{a-1} \rfloor}. 
\end{equation}
b) (\cite[Thm. 1.1]{DAGS12}) Every sequence of length $n$ in $G$ has a nonempty zero-sum generalized subsequence of length divisible by $\exp G$ when 
\begin{equation}
\label{DAGSEQ}
n \geq \exp G - 1 + \frac{D(G)}{a -1}.
\end{equation}
\end{cor}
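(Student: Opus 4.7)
The plan is to derive both parts as direct specializations of Theorem \ref{EGZTHM}, after invoking Olson's identity from Theorem \ref{OLSONTHM}(ii) to convert $D(G)$ into the concrete quantity $1+\sum_{i=1}^r (p^{v_i}-1)$ (recall that $G$ is a $p$-group). With $A_1=\ldots=A_n$, we have $a_M=a$, and with $k=v_r$ we have $p^{k}=\exp G$, so the lower bound (\ref{BIGEGZEQ}) of Theorem \ref{EGZTHM} specializes to
\[ \operatorname{EGZ}_{A,v_r}(0)\geq \mm\bigl(a,\ldots,a;\,na-(D(G)-1)-(a-1)(\exp G-1)\bigr). \]
The trivial tuple $(0,\ldots,0)\in A$ satisfies $\sum a_i x_i = 0$ and contributes zero nonzero terms (hence trivially $p^{v_r}$-divisibly many), so $\operatorname{EGZ}_{A,v_r}(0)\geq 1$; thus the $\mm$ bound is a genuine lower bound, not vacuous.

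For part (a), I will compute the right-hand side using Lemma \ref{FSLEMMA}(b). Writing $N=na-(D(G)-1)-(a-1)(\exp G-1)$, one finds
\[ N-n = n(a-1) - (D(G)-1) - (a-1)(\exp G - 1), \]
so $\lfloor (N-n)/(a-1)\rfloor = n+1-\exp G + \lfloor (1-D(G))/(a-1)\rfloor$, and the residue class of $N-n$ modulo $a-1$ is $-(D(G)-1) \equiv -\sum_{i=1}^r (p^{v_i}-1) \pmod{a-1}$, which is precisely the $R$ defined in the statement. Substituting these values into Lemma \ref{FSLEMMA}(b) yields (\ref{DAGSINEQ}) exactly.

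For part (b), the goal is to show that the bound in (\ref{BIGEGZEQ}) is at least $2$ under hypothesis (\ref{DAGSEQ}), since removing the trivial all-zero solution then leaves at least one nontrivial $(a_1,\ldots,a_n)\in A$ with $\sum a_i x_i = 0$ and $\exp G \mid \#\{i : a_i\neq 0\}$; nontriviality forces this count to be a \emph{positive} multiple of $\exp G$, producing the required nonempty generalized zero-sum subsequence of length divisible by $\exp G$. To obtain $\mm \geq 2$, I invoke Lemma \ref{OBVIOUSLEMMA}, whose hypotheses are $\max\{a,\ldots,a\}=a\geq 2$ (built into the setup, else $D(G)/(a-1)$ is meaningless) and $N>n$. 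The latter inequality is equivalent to $n > (D(G)-1)/(a-1) + \exp G - 1$, which follows strictly from hypothesis (\ref{DAGSEQ}): $n \geq D(G)/(a-1) + \exp G - 1 > (D(G)-1)/(a-1) + \exp G - 1$.

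The main obstacle is essentially just bookkeeping: matching the floor-function exponent and the modular residue $R$ from Lemma \ref{FSLEMMA}(b) to the quantities in (\ref{DAGSINEQ}), and confirming that the strict inequality needed for Lemma \ref{OBVIOUSLEMMA} follows from the non-strict hypothesis (\ref{DAGSEQ}). The substantive mathematical content is entirely contained in Theorem \ref{EGZTHM} and Olson's evaluation of $D(G)$ for $p$-groups.
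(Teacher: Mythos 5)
Your proposal is correct and follows essentially the same route as the paper's proof: in both part (a) and part (b) you observe that the all-zero tuple guarantees $\operatorname{EGZ}_{A,v_r}(0)\geq 1$, apply Theorem \ref{EGZTHM} with the specialization $a_M=a$, $p^k=\exp G$, and $\sum_{i=1}^r(p^{v_i}-1)=D(G)-1$, and then use the strict inequality $D(G)/(a-1)>(D(G)-1)/(a-1)$ to obtain $N>n$ so that Lemma \ref{FSLEMMA}(b) (for the exact count) and Lemma \ref{OBVIOUSLEMMA} (for the existence statement) can be invoked. The only difference is that you spell out the bookkeeping for the floor and residue in Lemma \ref{FSLEMMA}(b) more explicitly than the paper does, which is a minor presentational matter.
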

\begin{proof}
a)  The empty subsequence ensures $\operatorname{EGZ}_{A,v_r}(0) \geq 1$, so Theorem \ref{EGZTHM} gives
\[ \operatorname{EGZ}_{A,v_r}(0) \geq \mm \left(a,\ldots,a;na - \sum_{i=1}^r \left( p^{v_i}-1 \right) - (a-1)(p^{v_r}-1) \right). \] 
We have 
\[n \geq \exp G - 1 + \frac{D(G)}{a-1} > \exp G - 1 + \frac{D(G)-1}{a-1}, \]
hence 
\[na - (D(G) -1) - (a-1)(\exp G -1) = 
na - \sum_{i=1}^r \left( p^{v_i}-1 \right) - (a-1)(p^{v_r}-1) > n. \]
By
Lemma \ref{FSLEMMA}b), we have 
\[ \mm \left(a,\ldots,a;na - \sum_{i=1}^r \left( p^{v_i}-1 \right) - (a-1)(p^{v_r}-1) \right) = (R+1)a^{n+1-\exp G + \lfloor \frac{1-D(G)}{a-1} \rfloor}. \]
b) Since $n \geq \exp G - 1 + \frac{D(G)}{a-1} > \exp G - 1 + \frac{D(G)-1}{a-1}$, we have
\[ na - \sum_{i=1}^r \left( p^{v_i}-1 \right) - (a-1)(p^{v_r}-1) > n.\]
It follows from part a) and Lemma \ref{OBVIOUSLEMMA} that 
$\operatorname{EGZ}_{A,v_r}(0) \geq 2$.  
%
\end{proof}
\noindent
In the proof of Corollary \ref{DAGS}b), rather than using part a) we could have applied Theorem \ref{BIGBRINKTHM}.  It is interesting to compare this approach with the proof of Corollary \ref{DAGS}b) given in \cite{DAGS12}.  Their argument proves the needed case of Theorem \ref{BIGBRINKTHM} by exploiting properties of binomial coefficients ${t \choose d}$ viewed as integer-valued polynomials and reduced modulo powers of $p$.  In 2006 IPM lecture notes, R. Wilson proves Theorem \ref{SCHANTHM} in this manner.  His method works to prove Theorem \ref{BIGBRINKTHM}.





\begin{thebibliography}{AKLMRS91}


\bibitem[AKLMRS91]{AKLMRS91} N. Alon, D. Kleitman, R. Lipton, R. Meshulam, 
M. Rabin and J. Spencer, \emph{Set systems with no union of cardinality 0 modulo m}. Graphs Combin. 7 (1991),  97�-99. 


\bibitem[AF93]{Alon-Furedi93} N. Alon and Z. F\"uredi, \emph{Covering the cube by affine hyperplanes}. Eur. J. Comb. 14 (1993), 79-�83.

\bibitem[Al99]{Alon99} N. Alon, \emph{Combinatorial Nullstellensatz}. 
Recent trends in combinatorics (M\'atrah\'aza, 1995).
Combin. Probab. Comput. 8 (1999), 7�-29. 



\bibitem[Ax64]{Ax64} J. Ax, \emph{Zeroes of polynomials over finite fields}.
Amer. J. Math. 86 (1964), 255-�261.

\bibitem[BR89]{Bailey-Richter89} C. Bailey and R. Bruce Richter, \emph{Sum zero (mod n), size n subsets of integers}. Amer. Math. Monthly 96 (1989), 240-�242.

\bibitem[Br11]{Brink11} D. Brink, \emph{Chevalley's theorem with restricted variables}. Combinatorica 31 (2011), 127-�130.



\bibitem[Ca60]{Carlitz60} L. Carlitz, \emph{A Characterization of Algebraic 
Number Fields with Class Number Two}. Proc. AMS 11 (1960), 391--392.

\bibitem[CCQWZ11]{CCQWZ11}  G. J. Chang, S.-H. Chen., Y. Qu, G. Wang and 
H. Zhang, \emph{On the number of subsequences with a given sum in a finite abelian group}. Electron. J. Combin. 18 (2011), no. 1, Paper 133, 10 pp. 

\bibitem[CGPT06]{CGPT06} A. Chattopadhyay, N. Goyal, P. Pudl\'ak and D. Th\'erien, \emph{Lower bounds for circuits with MOD$_m$ gates}.  Proc.  47th Annual Symp. on Foundations of Computer Science, IEEE 2006, 709--718.

\bibitem[Ch35]{Chevalley35} C. Chevalley, \emph{D\'emonstration d'une hypoth\`ese de M. Artin.} Abh. Math. Sem. Univ. Hamburg 11 (1935), 73�-75.


\bibitem[Cl14]{Clark14} P.L. Clark, \emph{The Combinatorial Nullstellens\"atze Revisited}. {\tt http://www.math.uga.edu/$\sim$pete/finitesatz.pdf}


\bibitem[DAGS12]{DAGS12} S. Das Adhikari, D.J. Grynkiewicz and Z.-W. Sun, \emph{
On weighted zero-sum sequences}. Adv. in Appl. Math. 48 (2012), 506�-527. 

\bibitem[Di09]{Dickson09} L.E. Dickson, \emph{On the representation of numbers by modular forms}.
 Bull. Amer. Math. Soc. Volume 15, Number 7 (1909), 338--347. 

\bibitem[EBK69]{EBK69} P. van Emde Boas and D. Kruyswijk, \emph{A combinatorial problem on finite abelian groups, III}, Report ZW-
1969-008, Math. Centre, Amsterdam, 1969.

\bibitem[EGZ61]{EGZ61} P. Erd\H os, A. Ginzburg and A. Ziv, \emph{Theorem in the additive number theory}. Bull. Research Council Israel 10F (1961), 41--43.

\bibitem[Es03]{Esnault03} H. Esnault, \emph{Varieties over a finite field with trivial Chow group of 0-cycles have a rational point}. Invent. Math. 151 (2003),  187�-191. 

\bibitem[HB11]{Heath-Brown11} D.R. Heath-Brown, \emph{On Chevalley-Warning theorems}. Uspekhi Mat. Nauk 66 (2011), 223--232; translation in
Russian Math. Surveys 66 (2011), 427-�436.



\bibitem[Ka71]{Katz71} N.M. Katz, \emph{On a theorem of Ax}. Amer. J. Math. 93 (1971), 485-�499. 

\bibitem[KP12]{Karasev-Petrov12} R.N. Karasev and F.V. Petrov, \emph{Partitions of nonzero elements of a finite field into pairs}. Israel J. Math. 192 (2012), 143�-156. 

\bibitem[La10]{Lason10} M. Laso\'n, \emph{A generalization of combinatorial Nullstellensatz}. Electron. J. Combin. 17 (2010), no. 1, Note 32, 6 pp. 

\bibitem[MN82]{Mead-Narkiewicz82} D.G. Mead and W. Narkiewicz, \emph{The capacity of $C_5$ and free sets in $C_m^2$}. Proc. Amer. Math. Soc. 84 (1982), 308-�310.


\bibitem[Ol69a]{Olson69a} J.E. Olson, \emph{A combinatorial problem on finite Abelian groups}. I. J. Number Theory 1 (1969), 8�-10.

\bibitem[Ol69b]{Olson69b} J.E. Olson, \emph{A combinatorial problem on finite Abelian groups. II.} J. Number Theory 1 (1969), 195-�199.

\bibitem[Sc74]{Schanuel74} S.H. Schanuel, \emph{An extension of Chevalley's theorem to congruences modulo prime powers}. J. Number Theory 6 (1974), 284-�290.

\bibitem[Sc08a]{Schauz08a} U. Schauz, \emph{Algebraically solvable problems: describing polynomials as equivalent to explicit solutions}. 
Electron. J. Combin. 15 (2008), no. 1, Research Paper 10, 35 pp. 



\bibitem[Ts33]{Tsen33} C.C. Tsen, \emph{Divisionsalgebren \"uber Funktionenk\"orpern}. Nachr. Ges. Wiss. G\"ottingen (1933), 335-�339.

\bibitem[Va90]{Valenza90} R.J. Valenza, \emph{Elasticity of factorizations 
in number fields}. J. Number Theory 36 (1990), 212--218.

\bibitem[Wa35]{Warning35} E. Warning, \emph{Bemerkung zur vorstehenden Arbeit von Herrn Chevalley}.  Abh. Math. Sem. Hamburg 11 (1935), 76�-83.




\end{thebibliography}
\end{document}